\documentclass[12pt,leqno,a4paper]{amsart}
\usepackage{amssymb,enumerate}
\usepackage{hyperref}
\newcommand{\FF}{{\mathbb{F}}}
\newcommand{\QQ}{{\mathbb{Q}}}
\newcommand{\ZZ}{{\mathbb{Z}}}

\newcommand{\bC}{{\mathbf{C}}}
\newcommand{\bG}{{\mathbf{G}}}
\newcommand{\bL}{{\mathbf{L}}}
\newcommand{\bM}{{\mathbf{M}}}
\newcommand{\bS}{{\mathbf{S}}}
\newcommand{\bT}{{\mathbf{T}}}
\newcommand{\bZ}{{\mathbf{Z}}}

\newcommand{\fS}{{\mathfrak{S}}}

\newcommand{\cE}{{\mathcal{E}}}

\newcommand{\cent}{\operatorname{C}}
\newcommand{\Ind}{{\operatorname{Ind}}}
\newcommand{\Irr}{{\operatorname{Irr}}}
\newcommand{\norm}{\operatorname{N}}
\newcommand{\Sub}{\operatorname{Sub}}
\newcommand{\Uch}{{\operatorname{Uch}}}

\newcommand{\GL}{{\operatorname{GL}}}
\newcommand{\SL}{{\operatorname{SL}}}
\newcommand{\SU}{{\operatorname{SU}}}
\newcommand{\GU}{{\operatorname{GU}}}
\newcommand{\Sp}{{\operatorname{Sp}}}
\newcommand{\GO}{{\operatorname{GO}}}
\newcommand{\SO}{{\operatorname{SO}}}

\newcommand{\tw}[1]{{}^{#1}\!}
\newcommand{\hbS}{{\hat\bS}}

\newcommand\subn{{\,\triangleleft\triangleleft\,}}
\newcommand{\blangle}{{\big\langle}}
\newcommand{\brangle}{{\big\rangle}}

\let\eps=\epsilon
\let\ga=\gamma
\let\la=\lambda
\let\vhi=\varphi

\theoremstyle{theorem}
\newtheorem{thm}{Theorem}[section]
\newtheorem{lem}[thm]{Lemma}
\newtheorem{prop}[thm]{Proposition}

\newtheorem{thmA}{Theorem}
\newtheorem{conjA}[thmA]{Conjecture}

\theoremstyle{definition}
\newtheorem{rem}[thm]{Remark}

\makeatletter
\def\namedlabel#1#2{\begingroup #2%
    \def\@currentlabel{#2}%
    \phantomsection\label{#1}\endgroup
}

\begin{document}

\title[The subnormaliser conjecture]{The subnormaliser conjecture\\ and unipotent characters}

\author{Gunter Malle}
\address{FB Mathematik, RPTU Kaiserslautern--Landau, Postfach 3049,
  67653 Kaisers\-lautern, Germany.}
\email{malle@mathematik.uni-kl.de}

\begin{abstract}
We prove instances of the subnormaliser conjecture on character
bijections for finite groups by deriving generic versions of such bijections
for unipotent characters of nearly simple groups of Lie type. For this, we
formulate an extension of $d$-Harish-Chandra theory to what we call
\emph{generic subnormalisers}, which are certain, usually disconnected,
reductive subgroups of a simple algebraic group. For very good primes the
generic bijections give rise to bijections to certain subgroups that should
contain subnormalisers and thus be suitable for an eventual inductive approach.
If the group in question has abelian Sylow $\ell$-subgroups for some
prime~$\ell$ then our bijections satisfy the properties predicted by the
subnormaliser conjecture, and moreover preserve character values up to sign,
Brauer $\ell$-blocks, and are Galois equivariant. 
\end{abstract}

\keywords{Moret\'o conjecture, subnormalisers, character bijections,
 unipotent characters, $d$-Harish-Chandra theory, Galois equivariance}

\subjclass[2020]{}

\thanks{The author gratefully acknowledges financial support by the
 DFG -- Project 286237555 -- TRR 195.}

\date{\today}

\maketitle


\section{Introduction}   \label{sec:intro}

Character correspondences are a recurrent and central theme in representation
theory of finite groups. A fundamentally new type was recently proposed in
the manuscript \cite{MR25} of Alex Moret\'o and Noelia Rizo. For any prime
number~$\ell$ they predict the existence of bijections with special properties
between ordinary irreducible characters of a finite group $G$ and those of
subnormalisers of $\ell$-elements of $G$. Here, for $x\in G$ its
\emph{subnormaliser in $G$} is defined as
$$\Sub_G(x):=\big\langle g\in G\mid
    \langle x\rangle\subn\langle g,x\rangle \big\rangle,$$
the subgroup generated by the elements $g\in G$ such that $\langle x\rangle$ is
subnormal in $\langle g,x\rangle$. For $\ell$-elements $x$ the subnormaliser has
a more intuitive and manageable description, namely $\Sub_G(x)$ is generated by
the normalisers of the Sylow $\ell$-subgroups of $G$ that contain~$x$
\cite[Prop.~2.6]{Ma25}.
We denote by $\Irr^x(G)$ the set of complex irreducible characters of~$G$ that
do not vanish at~$x$. The following first appeared in \cite{MR25}, see
\cite[Conj.~B]{Mo26}:

\begin{conjA}[Moret\'o]   \label{conj:AN}
 Let $G$ be a finite group and $\ell$ a prime number. Then for any
 $\ell$-element $x\in G$ there exists a bijection
 $f_x:\Irr^x(G)\to\Irr^x(\Sub_G(x))$ such that
 \begin{enumerate}
  \item[\namedlabel{deg}{\rm(1)}] $\chi(1)_\ell=f_x(\chi)(1)_\ell$, and
  \item[\namedlabel{fields}{\rm(2)}] $\QQ(\chi(x))=\QQ(f_x(\chi)(x))$.
 \end{enumerate}
\end{conjA}

That is, the bijection should preserve $\ell$-parts of character degrees as well
as irrationality properties of the character value at $x$. In this paper, we
will say that the \emph{strong subnormaliser conjecture} holds for $G$ at $x$
if furthermore
 \begin{itemize}
  \item[\namedlabel{values}{\rm(3)}] $\chi(x)=\pm f_x(\chi)(x)\quad$ for all
   $\chi\in\Irr^x(G)$,
\end{itemize}
that is, if there exists a bijection $f_x$ that even preserves the character
values at~$x$, up to sign. Clearly, \ref{values} implies \ref{fields}. It is
known, though, that \ref{values} does not hold in general, but given that the
known counter-examples only occur for groups with non-abelian Sylow
$\ell$-subgroups, one might speculate that~\ref{values} can be achieved at
least when Sylow $\ell$-subgroups of~$G$ are abelian.

The strong subnormaliser conjecture was shown to hold for all symmetric groups
$\fS_n$ by Mart{\'i}nez \cite{MM25} for the prime $\ell=2$. He also proved it
for any prime $\ell$ for those $\ell$-elements $x\in\fS_n$ contained in a unique
Sylow $\ell$-subgroup (so-called \emph{picky} elements). In joint work with
Schaeffer Fry \cite{MS25} we showed the subnormaliser conjecture for picky
$\ell$-elements of groups of Lie type in characteristic different from~$\ell$,
and the strong form when moreover $\ell=2$. For $p$-solvable groups the picky
case was shown by Moret\'o, Navarro and Rizo \cite{MNR26}.

Navarro \cite{Na04} conjectured that there exist McKay bijections with respect
to the prime~$\ell$ preserving character fields over the $\ell$-adic numbers
$\QQ_\ell$. It seems likely that the conjectured Moret\'o bijections can
also be chosen with this property.
\smallskip

In this paper we obtain further instances of the strong form of the
Moret\'o conjecture for unipotent characters of nearly simple groups of
Lie type with abelian Sylow $\ell$-subgroups, moreover preserving blocks and
realised by Galois equivariant bijections:

\begin{thmA}   \label{thm:main}
 Let $\bG$ be a simple linear algebraic group with Steinberg map $F:\bG\to\bG$.
 Let $\ell$ be a prime such that Sylow $\ell$-subgroups of $G:=\bG^F$ are
 abelian. Then the Moret\'o Conjecture holds for the unipotent characters
 of~$G$ for all $\ell$-elements $x\in G$.  \par
 More precisely, there is a bijection
 $$f:\Uch^x(G)\to\Uch^x(\Sub_G(x))$$
 as in Conjecture~\ref{conj:AN}, depending only on $\Sub_G(x)$ but not on~$x$,
 such that
 \begin{enumerate}[\rm(1)]
  \item $f(\chi)(xy) = \pm \chi(xy)$ for every $\ell'$-element
   $y\in\cent_G(x)_{\ell'}$ and every $\chi\in\Uch^x(G)$;
  \item $f$ preserves $\ell$-blocks; and
  \item for all $\chi\in\Uch^x(G)$ the character fields
   $\QQ_\ell(\chi)=\QQ_\ell(f(\chi))$ agree.
 \end{enumerate}
\end{thmA}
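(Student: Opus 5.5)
We work with the standard structure available when Sylow $\ell$-subgroups of $G=\bG^F$ are abelian. Assume $\ell$ is not the defining characteristic; otherwise abelian Sylow subgroups force tiny cases that can be checked directly. Let $d=d_\ell(q)$ be the order of $q$ modulo $\ell$ (modulo $4$ if $\ell=2$). Then a Sylow $\ell$-subgroup $P$ lies in $\bS^F$ for a Sylow $d$-torus $\bS$ of $\bG$, and $\norm_G(P)\le \norm_G(\bS)$.

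\emph{Step 1: identify $\Sub_G(x)$.} By \cite[Prop.~2.6]{Ma25}, $\Sub_G(x)$ is generated by the normalisers of Sylow $\ell$-subgroups containing $x$. All such $P$ are abelian and contain $x$, so they lie in $\cent_G(x)$. Hence $\Sub_G(x)$ is generated by the $\norm_G(P)$, and these are conjugate under $\cent_G(x)$. I expect to obtain
$$\Sub_G(x)=\cent_G(x)\,\norm_G(P),$$
at least up to $\ell'$-index issues controlled by Frattini arguments. For abelian Sylow subgroups, $\cent_\bG(x)^\circ$ is a $d$-split Levi subgroup $\bL=\cent_\bG(\bS_x)$, where $\bS_x$ is the $d$-torus generated by the $\ell$-part. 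One then gets $\Sub_G(x)=\norm_G(\bS_x)$-type groups, i.e.\ the ``generic subnormaliser'' $\bL^F.W$. So Step~1 is to show that $\Sub_G(x)$ equals the $F$-points of a (possibly disconnected) reductive group $\hbS=\cent_\bG(\bS_x)\cdot\norm_\bG(\bS)$, and that this depends only on the $d$-torus $\bS_x$.

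\emph{Step 2: parametrise both sides.} By generalised $d$-Harish-Chandra theory (Broué--Malle--Michel), $\Uch(G)$ is partitioned into $d$-series labelled by $d$-cuspidal pairs $(\bM,\la)$, with a bijection from each series to $\Irr(W_G(\bM,\la))$. I would extend this theory to the disconnected group $\hbS$: its unipotent characters (constituents of inductions from $\bL^F$) are labelled by the same $d$-cuspidal pairs $(\bM,\la)$ with $\bM\le\bL$, together with characters of the relative Weyl group in $\hbS$. Since $\bS\le\bM$ in every case relevant here, that relative group equals $W_G(\bM,\la)$. This yields a bijection $\Uch(G)\to\Uch(\Sub_G(x))$ preserving labels, as in the abelian-defect McKay analysis. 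It depends only on $\Sub_G(x)$.

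\emph{Step 3: the properties.} Character values at $xy$ are computed by the generalised Curtis-type formula
$$\chi(xy)=R^{\bG}_{\bL}(\chi)\big|_{\bL^F}(xy)\quad (\bL=\cent_\bG(x)^\circ)$$
via Deligne--Lusztig character formula (unipotent characters are uniform on $\ell$-elements after restriction). Both $\chi$ and $f(\chi)$ then restrict to $\cent_G(x)$ via the $d$-Harish-Chandra restriction, i.e.\ the Howlett--Lehrer-type Comparison theorem for $W_{\bG}(\bL)$ versus $W_{\hbS}(\bL)$. Values agree up to the sign $\eps$ arising from $d$-Harish-Chandra isometries. This gives (1), and also shows that the non-vanishing sets $\Uch^x$ correspond. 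The $\ell$-parts of degrees match because degrees are $\pm$ the same polynomial in $q$ modulo $\Phi_d$-free parts, and the $\ell$-parts come only from $\Phi_d$-powers. Blocks of unipotent characters are the $d$-series (Cabanes--Enguehard), so (2) follows because labels are preserved. For (3), $\QQ_\ell$-fields of unipotent characters are determined by the Frobenius eigenvalues / Lusztig parametrisation, whose known exceptions occur only in $E_7,E_8$. On the $\hbS$ side the fields come from $W_G(\bM,\la)$ and the cuspidal $\la$. These are checked to agree case by case.

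The main obstacle is Step~2/3's value comparison, i.e.\ proving the character formula for the disconnected group $\hbS^F$ and matching signs uniformly. I would first try to reduce via Clifford theory over $\bL^F\trianglelefteq\hbS^F$ to the known extendibility of unipotent characters (Lusztig, Malle) and then use the Comparison Theorem; exceptional types would be handled by computer (Chevie).
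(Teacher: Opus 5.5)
Your overall architecture (a generic subnormaliser, a $d$-Harish-Chandra theory for it, value comparison via the Curtis--Schewe formula, blocks via $d$-series, fields case by case) is the one the paper uses. However, Step~1 identifies the wrong group, and Steps~2--3 are built on it.

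\textbf{The group in Step~1.} Your own correct observations give $\Sub_G(x)=\langle\cent_G(x),\norm_G(P)\rangle$. This is not the product set $\cent_G(x)\norm_G(P)$, which in general is not even a subgroup: $\norm_G(P)\ge\norm_G(\bT_d)$ (with $\bT_d$ your Sylow $d$-torus) need not normalise $\bC:=\cent_\bG(x)$. The generated group contains all $\cent_G(x)^n$ for $n\in\norm_G(\bT_d)$. Its generic model is $\hat S=\hbS(\bC)^F$ with $\hbS(\bC)=\langle\norm_G(\bT_d),\bC\rangle$. The identity component $\bS(\bC)=\langle\bC^w\mid w\in W_d\rangle$ is a connected reductive group in which $\bC$ is only a $d$-split Levi subgroup, usually a proper one. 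For example, for $G=\SL_n(q)$ with $n=ad$ and $\cent_G(x)=(\GL_1(q^d)^{a-1}\times\GL_d(q))\cap G$, one has $\Sub_G(x)=(\GL_d(q)\wr\fS_a)\cap G$, in which $\cent_G(x)$ is not normal. So the picture ``$\bL^F.W$ with $\bL^F\trianglelefteq\hbS^F$'' is wrong, and so is a Clifford theory over $\bL^F$.

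Moreover, equality of $\Sub_G(x)$ with the generic group cannot come from Frattini arguments. The paper takes it from the explicit determination of subnormalisers in \cite{Ma26}, and it genuinely fails for $G=G_2(4)$, $\ell=5$. There $\Sub_G(x)\cong J_2$ is strictly smaller than $\hat S$, carries no $d$-Harish-Chandra theory, and must be checked from character tables.

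\textbf{The main obstacle, correctly posed.} With the correct group, your ``main obstacle'' is the identity
\[
  R_\bC^\bG=I^G\circ(I^{\hat S})^{-1}\circ\Ind_S^{\hat S}\circ R_\bC^\bS
  \qquad\text{on }\cE(C,(\bL,\la)),\quad S=\bS(\bC)^F .
\]
Using ordinary $d$-Harish-Chandra theory in the connected groups $\bG$ and $\bS(\bC)$, this reduces to $I^{\hat S}\circ\Ind_{W_S}^{W_G}=\Ind_S^{\hat S}\circ I^S$, where $W_S(\bL,\la)\unlhd W_G(\bL,\la)$ and $W_G/W_S\cong\hat S/S$. That in turn requires two things: corresponding $\vhi\in\Irr(W_S)$ and $I^S(\vhi)\in\cE(S,(\bL,\la))$ must have the \emph{same} inertia groups, and both must extend to them.

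Extendibility alone does not give this. Existing generalised $d$-Harish-Chandra results for graph automorphisms cover many cases with $[\bS(\bC),\bS(\bC)]$ simple. The remaining cases are handled by hand from the classification of $\hbS(\bC)$:
\begin{itemize}
\item the wreath products;
\item $E_7$ with $d=4$, comparing $G(4,2,2)$ with $G_8$ or $G(4,1,2)$;
\item $\SO^+_{2n}(q)$, comparing $G(n,2,2)$ with $G(2n,2,2)$ via $G(n,1,2)$.
\end{itemize}
Once this is in place, your Step~3 goes through, with Lusztig restriction $\tw*R_\bC^\bG$ (not $R^\bG_\bL$) in the Curtis--Schewe formula. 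But it only yields a bijection on the $d$-series above $d$-cuspidal pairs of $\bC$, not a bijection $\Uch(G)\to\Uch(\Sub_G(x))$.

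\textbf{Two further claims asserted rather than argued.}
\begin{itemize}
\item Blocks: each $d$-series of the disconnected group $\hat S$ must lie in a single $\ell$-block, and distinct series in distinct blocks. The paper adapts \cite[Prop.~2.17]{KM13} and applies \cite[Thm~5.24]{BMM93} to $S$.
\item Fields: the fields of $\Uch(\hat S)$ are not simply governed by $W_G(\bL,\la)$ and $\la$, because extensions to graph or field automorphisms matter. For instance, for $\GL_n(q).2$ and $\GU_n(q).2$ in $\SO^+_{2n}(q)$, $\QQ_\ell$-rationality rests on $q$, resp.\ $-q$, being a square in $\QQ_\ell$ for the relevant $d$.
\end{itemize}
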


That is, the bijection preserves character values up to sign even on all
elements of $G$ with $\ell$-part equal to~$x$. The set $\Uch(\Sub_G(x))$ of
`unipotent characters' of the subnormaliser will be defined at the beginning
of Section~\ref{subsec:dHC}.
\smallskip

In our situation there is of course a strong connection between the existence
of a Galois equivariant bijection as above and the Alperin--McKay--Navarro
(AMN) conjecture. Indeed, if Sylow $\ell$-subgroups of $G$ are abelian, all
characters have height zero in their block by the proven, but highly
non-trivial Brauer height zero conjecture. Then, by the (unproven) AMN
conjecture applied to both the group $G$ and the subnormaliser $\Sub_G(x)$ in
question we would obtain Galois equivariant bijections
between height zero characters in corresponding blocks of~$G$ and $\Sub_G(x)$.
But whether they respect non-vanishing at $x$, and even the value at $x$ up to
sign, is not at all clear. Note also that the AMN conjecture has at present not
been reduced to a question on nearly simple groups.
\smallskip

In fact, our result extends to all very good primes in a slightly weaker form,
involving a subgroup $\hat S(\bC)$ to be defined in
Section~\ref{subsec:setting}, which still seems strong enough for an eventual
inductive approach:

\begin{thmA}   \label{thm:mainB}
 Let $\bG$ be a simple linear algebraic group with Frobenius map $F:\bG\to\bG$
 with respect to an $\FF_q$-structure. Let $\ell\ge5$ be a prime which is good
 for $\bG$, not dividing~$q$, nor $q-\eps$ if $G=A_n(\eps q)$. Let $d$ be the
 order of $q$ modulo~$\ell$. Then for any $\ell$-element $1\ne x\in\bG^F$ there
 is a $d$-split proper Levi subgroup $\bC$ of $(\bG,F)$ and a bijection
 $$f:\Uch_d(G\mid \bC)\to\Uch_d(\hat S(\bC)\mid \bC),$$
 depending only on $\bC$ but not on~$x$, such that for any
 $\chi\in\Uch_d(G\mid \bC)$
 \begin{enumerate}[\rm(1)]
  \item $\chi(1)_\ell=f(\chi)(1)_\ell$;
  \item $f(\chi)(xy) = \pm \chi(xy)$ for every $\ell'$-element
   $y\in\cent_G(x)_{\ell'}$; and
  \item the character fields $\QQ_\ell(\chi)=\QQ_\ell(f(\chi))$ agree.
 \end{enumerate}
\end{thmA}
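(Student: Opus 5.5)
We plan to reduce Theorem~\ref{thm:mainB} to a generic statement in $d$-Harish-Chandra theory, applied to the generic subnormaliser $\hat S(\bC)$ and combined with the character formula of Broué--Michel type for values at elements with given $\ell$-part.

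\emph{Choice of $\bC$.} Given $1\ne x\in G$ an $\ell$-element, we take $\bC:=\cent_\bG^\circ(\bS)$, where $\bS$ is the $\Phi_d$-part of the centre of $\cent_\bG^\circ(x)$, i.e.\ the minimal $d$-torus such that $\cent_\bG^\circ(x)\le\bC$. Since $\ell\ge5$ is good and very good in type $A$, centralisers of $\ell$-elements are Levi subgroups by the standard results of Cabanes--Enguehard. Moreover $x$ lies in $Z(\bC)^F_\ell$ and $\bC$ is a proper $d$-split Levi subgroup because $x\ne1$. Then $\hat S(\bC)$ is, by its definition in Section~\ref{subsec:setting}, the disconnected subgroup generated by $\bC$ and the relevant part of $\norm_\bG(\bS)$. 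Its unipotent characters $\Uch_d(\hat S(\bC)\mid\bC)$ are parametrised, by the $d$-Harish-Chandra theory for generic subnormalisers developed in Section~\ref{subsec:dHC}, by pairs consisting of a $d$-cuspidal pair $(\bL,\la)$ with $\bL\le\bC$ and a character of a relative Weyl group $W_{\hat S(\bC)}(\bL,\la)$. On the $G$ side, $\Uch_d(G\mid\bC)$ is parametrised by the same pairs with $W_G(\bL,\la)$.

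\emph{Construction of $f$.} The key step is to show that for the relevant $d$-cuspidal pairs $(\bL,\la)$ the relative Weyl groups coincide: $W_G(\bL,\la)=W_{\hat S(\bC)}(\bL,\la)$. The intended reason is that $\hat S(\bC)$ contains the normaliser of a Sylow $d$-torus inside $\bC$. Assuming this, $f$ is defined by matching the characters of the common relative Weyl group, adjusted by the generic signs $\eps$ appearing in $R_\bL^G$. This requires a generic comparison of the degree polynomials $\operatorname{Deg}(\chi)$ and $\operatorname{Deg}(f(\chi))$, showing they agree up to sign at $\Phi_d$-parts. Since $\ell$ divides $\Phi_e(q)$ only for $e=d\ell^i$, and $\ell\ge5$ is good, the $\ell$-parts of the degrees coincide, which gives~(1). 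We will verify the equality of relative Weyl groups case by case using the known lists of $d$-cuspidal pairs. This is where we expect the main obstacle: the disconnectedness of $\hat S(\bC)$ forces a Clifford-theoretic analysis of how $\hat S(\bC)/\bC^F$ acts on $\Uch(\bC)$.

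\emph{Values and fields.} For~(2) we use the character formula $\chi(xy)=\sum \dots$, namely the Deligne--Lusztig formula $\chi(xy)=R_{\bC}^{G}$-restriction evaluated at $y$ in $\cent_G(x)$. Because $\cent_\bG^\circ(x)\le\bC$ and the Lusztig restriction ${}^*R_\bC^G\chi$ is determined by the $d$-Harish-Chandra data via the Howlett--Lehrer-type comparison theorem, we obtain ${}^*R_\bC^G\chi=\pm{}^*R_\bC^{\hat S(\bC)}f(\chi)$ on $\Uch(\bC)$. This equality follows from compatibility of $f$ with the Mackey-type decomposition over the common relative Weyl group. Evaluating at $xy$ with $y\in\cent_G(x)_{\ell'}$ then gives~(2).

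Finally,~(3) follows because unipotent character fields are determined generically by Lusztig's Frobenius eigenvalues and the fields of relative Weyl group characters. Both are preserved by $f$ up to the explicitly known exceptions, namely $\sqrt{\pm q}$ and cube roots of unity in $E_7$, $E_8$, $G_2$ and twisted types. We then check that over $\QQ_\ell$ with $\ell\ge5$ these coincide on matched characters.
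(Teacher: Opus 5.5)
Your architecture matches the paper's: choose $\bC\supseteq\cent_\bG^\circ(x)$ via Cabanes--Enguehard, parametrise both sides by $\Irr(W_G(\bL,\la))$, and compare values via Curtis--Schewe. However, the step that carries the proof is missing, and your description of $\hat S(\bC)$ is wrong.

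\textbf{The structure of $\hat S(\bC)$ and the missing $d$-Harish-Chandra theory.} By definition $\hbS(\bC)=\langle\norm_G(\bT_d),\bC\rangle$ with $\bT_d$ a Sylow $d$-torus of $\bG$. It is not generated by $\bC$ and the normaliser of the central $d$-torus of $\bC$. Its identity component is $\bS(\bC)=\langle\bC^w\mid w\in W_d\rangle$, usually much larger than $\bC$, and $\hat S(\bC)$ need not normalise $\bC$. For $\SL_n(q)$ one has $\hat S=(\GL_d(q)\wr\fS_a)\cap G$ but $C=(\GL_1(q^d)^{a-1}\times\GL_d(q))\cap G$. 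So a Clifford analysis of ``$\hat S(\bC)/\bC^F$ acting on $\Uch(\bC)$'' targets the wrong normal subgroup.

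The equality $W_{\hat S}(\bL,\la)=W_G(\bL,\la)$ that you call the key step is not where the difficulty lies. It follows from a Frattini argument, since $\bL$ contains a Sylow $d$-torus of $\bG$ and $\hat S$ contains the normalisers of all Sylow $d$-tori of $\bC$. Nor does it by itself give a signed bijection $\Irr(W_G(\bL,\la))\to\cE(\hat S,(\bL,\la))$ compatible with induction. What is needed is Theorem~\ref{thm:d-hc}:
\begin{itemize}
\item $d$-Harish-Chandra theory in the connected group $\bS(\bC)$, in which $\bC$ is $d$-split, gives $I^S$ on $\Irr(W_S(\bL,\la))$, with $W_S\unlhd W_G$ and $W_G/W_S\cong\hat S/S$.
\item One must show that $\vhi$ and $I^S(\vhi)$ have the same inertia group and that both extend to it. On the group side this is Proposition~\ref{prop:ext}; on the reflection-group side it is Lemma~\ref{lem:ext}, wreath products, and the $E_7$, $d=4$ check.
\end{itemize}
Only then does one get $R_\bC^\bG=I^G\circ(I^{\hat S})^{-1}\circ\Ind_S^{\hat S}\circ R_\bC^\bS$ on $\cE(C,(\bL,\la))$. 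That is exactly the identity ${}^*R_\bC^G\chi=\pm{}^*R_\bC^{\hat S}f(\chi)$ your proof of~(2) presupposes.

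\textbf{Degrees and fields.} Your argument for~(1) does not work as stated. Within a $d$-Harish-Chandra series every character has the same $\Phi_d$-part as $\la$, so matching $\Phi_d$-parts says nothing about $\vhi(1)_\ell$. That quantity lives in the factors $\Phi_{d\ell^i}(q)$, $i\ge1$, each contributing exactly $\ell$. Moreover, $f(\chi)(1)$ involves the index of an inertia group in $\hat S$, which is not a cyclotomic polynomial value. The paper instead uses $(\chi(1)/\la(1))_\ell=\vhi(1)_\ell$ from \cite[Prop.~6.5]{KMS}, in both $G$ and $S$. It combines this with $f(\chi)(1)/\chi'(1)=\vhi(1)/\vhi'(1)$ for $\chi'$ below $f(\chi)$ and $\vhi'$ below $\vhi$, which comes from Theorem~\ref{thm:d-hc}.

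For~(3), the fields of characters of $\hat S$ above unipotent characters of $S$ depend on the rationality of extensions to the disconnected group; this is the input from \cite{DM24} and \cite{RSST}. Your heuristic via Frobenius eigenvalues and fields of characters of $W_G(\bL,\la)$ does not capture this. The paper obtains~(3) by reducing an arbitrary $d$-split $\bC$, via Remark~\ref{rem:all C}, to the classified list of groups $\hat S(\bC)$, and then checking case by case in Theorem~\ref{thm:galois}.
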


Here, $\Uch_d(G\mid \bC)$ denotes the set of unipotent characters of $G$
lying above $d$-cuspidal pairs of $\bC$.

Let us note that even though we only treat the case of unipotent characters
here it seems reasonable to hope that this might serve as a blueprint for the
general case: using an equivariant Jordan decomposition our methods and results
might be extendable, at the cost of additional technicalities, to all
irreducible characters, at least for groups coming from an algebraic group
with connected centre, generalising our approach in \cite{MS25} for the picky
case.
\medskip

Our proofs rely on the construction of bijections to \emph{generic subnormaliser
subgroups} as introduced in \cite{Ma26} for which we prove a generalisation of
$d$-Harish-Chandra theory for unipotent characters in Theorem~\ref{thm:d-hc}.
We derive generic bijections preserving values up to sign from the
Curtis--Schewe formula for Lusztig restriction (as in \cite{MS25}) in
Theorem~\ref{thm:bij}. The existence of a bijection to subnormalisers in the
abelian Sylow case stated in
Theorem~\ref{thm:main} is then shown in Theorem~\ref{thm:MR}, using our
earlier determination of subnormalisers of $\ell$-elements in this situation
\cite{Ma26}. The block preservation and the Galois equivariance are the content
of Proposition~\ref{prop:blocks} and Theorem~\ref{thm:galois}, respectively.
Finally, in Section~\ref{subsec:rat} we also prove Theorem~\ref{thm:mainB}.
\medskip

\noindent
{\bf Acknowledgements:} I thank Alex Moret\'o and Noelia Rizo for introducing
me to this beautiful conjecture and to Mandi Schaeffer Fry for her comments on
an earlier version.

\section{Generic subnormalisers and $d$-Harish-Chandra theory}   \label{sec:RLG}
In this section we introduce the generic setup and construct generic
versions of the stipulated bijections through a suitable generalisation of
$d$-Harish-Chandra theory.

\subsection{Unipotent characters of generic subnormalisers}   \label{subsec:setting}

We consider the following setting: $\bG$ is a linear algebraic group over an
algebraically closed field of characteristic~$p$ and $F:\bG\to\bG$ is a
Frobenius endomorphism with respect to an $\FF_q$-structure, and $G:=\bG^F$,
a finite group of Lie type. We refer to \cite{MT} and \cite{GM20} for basic
notions and results on the structure and representation theory of~$\bG$,
respectively $G$.

Let $d\ge1$ be an integer and let $\bT_d\le\bG$ be a Sylow $d$-torus. We note
$W_d:=W_d(G):=\norm_G(\bT_d)/\cent_G(\bT_d)$ for its relative Weyl group. For
a $d$-split Levi subgroup $\bC$ of $(\bG,F)$ we consider the subgroups
 $$\bS:=\bS(\bC):=\langle\bC^w\mid w\in W_d\rangle$$
and
 $$\hbS:=\hbS(\bC):=\langle\norm_G(\bT_d),\bC\rangle$$
of $\bG$. These were first defined in \cite[Prop.~4.28]{Ma26}, where we showed
that $\bS$ is connected reductive and $F$-stable, $\hbS^\circ=\bS$, 
$\norm_G(\bS)=\hbS^F$, and $\bC$ is a $d$-split Levi subgroup of~$\bS$, being
the centraliser $\bC=\cent_\bS(\bZ(\bC)^\circ_d)$ of the Sylow $d$-torus
$\bZ(\bC)^\circ_d$ of its centre. Observe that the definition
of~$\bS(\bC)$ depends on the Sylow $d$-torus $\bT_d$ of $\bC$, but since all
of these Sylow tori are conjugate inside $\bC$, $\bS(\bC)$ is uniquely defined
from $\bC$ up to conjugation.

Writing $S:=\bS^F$ and $\hat S:=\hbS^F$ for the finite groups of fixed points,
we thus have the following containments of groups
$$C\le S\unlhd\hat S$$
in which the normalisers of $\bT_d$ are given by
$$\cent_G(\bT_d)\le \norm_C(\bT_d)\le \norm_S(\bT_d)\unlhd
    \norm_{\hat S}(\bT_d)=\norm_G(\bT_d)$$
all of which then also normalise $T_d$ and have quotients
\begin{equation}   \label{eq1}
  1\le W_d(C)\le W_d(S)\unlhd W_d(G)
\end{equation}
modulo $\cent_G(\bT_d)$. (Observe that $\norm_{\hat S}(\bT_d)=\norm_G(\bT_d)$
and thus $\norm_S(\bT_d)$ is indeed normal in $\norm_G(\bT_d)$.) Now $W_d(S)$,
being the relative Weyl group of the Sylow $d$-torus $\bT_d$ of $\bS$, is a
finite complex reflection group \cite[\S3A]{BMM93}, normal in the reflection
group $W_d(G)$, hence generated by some conjugacy classes of reflections in
$W_d(G)$. The possibilities for such a constellation were classified in
\cite[Prop.~3.12]{BMM99}.

\begin{rem}   \label{rem:gen subn}
The above construction is strongly linked to subnormalisers: assume that
$x\in\bT_d^F$ is an $\ell$-element for a prime $\ell$ such that $q$ has
order~$d$ modulo~$\ell$, Sylow $\ell$-subgroups of $\bG^F$ are abelian and
$\bC=\cent_\bG(x)^F$. Then $\hat S$ contains $\Sub_G(x)$ by
\cite[Cor.~4.29]{Ma26}. In fact, in most cases the two groups agree; one should
think of $\hat S$ as a kind of \emph{generic subnormaliser} of~$x$. This will
come into play in our proof of Theorem~\ref{thm:main} in Section~\ref{sec:conj}.
\end{rem}

\begin{rem}   \label{rem:all C}
 In \cite{Ma26} we determined the structure of $\bS(\bC)$ and $\hbS(\bC)$ in
the case that $\bC$ is the centraliser of an $\ell$-element of $\bG^F$ for a
prime $\ell$ with $d=e_\ell(q)$ and such that $\bG^F$ has abelian Sylow
$\ell$-subgroups. This result does in fact cover all $d$-split Levi subgroups
of~$\bG$. Indeed, let $\bC$ be $d$-split in $(\bG,F)$. Then for any $a\ge1$
prime to~$d$, $\bC$ is also $d$-split in $(\bG,F^a)$. Choosing $a$ large enough
we can arrange that $q^{ad}-1$ has a Zsigmondy prime divisor $\ell$ larger than
the order of the Weyl group of $\bG$, so that Sylow $\ell$-subgroups
of~$\bG^{F^a}$ are abelian. Enlarging $a$ even further if necessary,
the Sylow $d$-torus of $(\bZ(\bC),F^a)$ will even contain an $\ell$-element~$x$
such that $\cent_\bG(x)=\bC$. Thus, $\bC$ arises in (and hence is covered by)
the setting of \cite{Ma26}. Moreover, choosing $a$ to be prime to the order of
the graph automorphism induced by $F$ (as we may), the complete root data of
$(\bC,F)$ and $(\bC,F^a)$ are isomorphic, so give rise to the same series of
finite reductive groups. 
\end{rem}

The following property of $\hat S$ will be important for our intended
generalisation of $d$-Harish-Chandra theory and thus for the construction of a
Moret\'o bijection:

\begin{prop}   \label{prop:ext}
 Assume $\bG$ is simple. Then for all $d$-split Levi subgroups $\bC$
 of~$(\bG,F)$ all unipotent characters of~$S:=\bS(\bC)^F$ extend to their
 inertia group in $\hat S:=\hbS(\bC)^F$.
\end{prop}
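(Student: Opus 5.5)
The plan is to reduce to the structure of $\hat S/S$ and then to the known extendibility results for unipotent characters under automorphisms of finite reductive groups. Since $\hbS^\circ=\bS$ and $\hat S=\hbS^F=\langle \norm_G(\bT_d),C\rangle$ with $C\le S$, we have $\hat S=S\,\norm_G(\bT_d)$. Hence
$$\hat S/S\cong \norm_G(\bT_d)/\norm_S(\bT_d)\cong W_d(G)/W_d(S).$$
So the first step is to use the classification of normal reflection subgroups $W_d(S)\unlhd W_d(G)$ from \cite[Prop.~3.12]{BMM99}, combined with the explicit description of $\bS(\bC)$ and $\hbS(\bC)$ from \cite{Ma26} (which by Remark~\ref{rem:all C} covers all $d$-split $\bC$). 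From this I expect to read off that $\hat S/S$ is small: trivial, cyclic, or a product of a cyclic group with a small symmetric group permuting isomorphic simple factors of $[\bS,\bS]$.

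Second, I would analyse how elements of $\hat S$ act on $S$. An element $n\in\norm_G(\bT_d)$ induces on $\bS$ an algebraic automorphism commuting with $F$. Modulo inner automorphisms of $\bS^F$ coming from $\bS$, this is a combination of a graph automorphism of $\bS$ and a permutation of its simple components. Unipotent characters are trivial on the centre and factor through $\bS/\bZ(\bS)$, and inner-diagonal automorphisms fix all unipotent characters. The action of $\hat S$ on $\Uch(S)$ therefore factors through $\hat S/S$, acting by graph automorphisms and permutations of factors.

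Third, extension. For a unipotent $\psi\in\Uch(S)$ with inertia group $I=\hat S_\psi$, I would write $S=\tilde S$-type central product pieces and decompose $\psi$ as a tensor product over the $F$-orbits of simple components. For permuted identical factors, the standard tensor-induction argument extends $\psi$ from $S$ to a wreath-product-type overgroup. For graph automorphisms fixing $\psi$, I would invoke Malle's results (Lusztig's, and \cite[Thm.~2.4/4.3]{GM20}-type statements) that unipotent characters extend to their stabiliser in $G\rtimes\mathrm{Aut}$. The remaining exceptions (the $D_4$ triality and the $B_2,G_2,F_4$ exceptional graphs) do not occur here, or occur only with cyclic quotient. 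It then suffices to check that the relevant quotient $I/S$ is cyclic or a split extension for which extension through the $\norm_G(\bT_d)$-part is realised by actual elements. For this I would use a Gallagher/Clifford argument: extend first to $S\,\cent$ of a complement and use that $\norm_G(\bT_d)\cap S=\norm_S(\bT_d)$.

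The main obstacle is that $\hat S$ need not be a split extension of $S$ by a subgroup of automorphisms. Abstract extendibility in $S\rtimes\mathrm{Aut}(S)$ therefore does not immediately give extendibility inside $\hat S$; one needs the cocycle to vanish. I would first try to treat this case by case from the list in \cite{Ma26}. In most cases $\hat S/S$ is cyclic, so extension is automatic. In the remaining cases (e.g.\ $\bS$ a product of several isomorphic factors permuted by $W_d(G)/W_d(S)$, possibly together with a graph automorphism), I would find a subgroup of $\norm_G(\bT_d)$ mapping isomorphically onto $I/S$, or at least with Sylow subgroups of $I/S$ cyclic, and apply the Sylow-wise extension criterion.
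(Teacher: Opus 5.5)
Your overall plan is the same as the paper's. You identify $\hat S/S\cong W_d(G)/W_d(S)$, go through the list of groups $\hbS(\bC)$ in \cite{Ma26}, and combine extendibility of unipotent characters under graph (or graph-field) automorphisms with wreath-product character theory. You are also right that $\hat S$ need not split over $S$. The gap is that your proposed remedy for non-splitness fails in a case that actually occurs. You propose either a complement inside $\norm_G(\bT_d)$ or cyclic Sylow subgroups of $I/S$. Take $G=\SL_{4d}(q)$ with $d\ge3$ odd, $S=\GL_d(q)^4\cap G$, $\hat S=(\GL_d(q)\wr\fS_4)\cap G$, and the $\hat S$-invariant character $\psi=\mathrm{St}^{\boxtimes 4}|_S$.
\begin{itemize}
\item $S$ is not a direct product of the permuted factors, so tensor induction does not apply to $S$ itself.
\item The Sylow $2$-subgroup $D_8$ of $I/S\cong\fS_4$ is not cyclic.
\item $S$ has no complement even over that Sylow $2$-subgroup. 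Let $u,v\in\GL_d(q)\wr\fS_4$ be commuting involutions over $(1\,2)$ and $(3\,4)$. Then $v=\mathrm{diag}(C_1,C_2)\oplus\left(\begin{smallmatrix}0&X\\X^{-1}&0\end{smallmatrix}\right)$ with $C_1^2=1$. Commuting with $u$ forces $C_2$ to be conjugate to $C_1$, so $\det v=(\det C_1)^2(-1)^d=-1$. Symmetrically $\det u=-1$, so no such pair lies in $G$.
\end{itemize}
Hence your reduction to ``$I/S$ cyclic or split'' does not go through here.

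The missing idea is to leave $\hat S$ rather than look for complements inside it. Embed $\hat S$ in the split group $\GL_d(q)\wr\fS_a$ (respectively $\GU_e(q)\wr\fS_a$ for $\SU_n(q)$). There every character of $\GL_d(q)^a$ extends to its inertia group by \cite[Thm~4.4.3]{JK}. Unipotent characters of $\GL_d(q)^a$ restrict irreducibly and injectively to $S$, so inertia groups correspond, and restricting such an extension gives the required extension in $\hat S$.

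Separately, your claim that $D_4$-triality occurs at most with cyclic quotient is false.
\begin{itemize}
\item $\bS$ of type $A_1^3.D_4$ with a diagonal $\fS_3$ occurs. This one is harmless, since $\fS_3$ has cyclic Sylow subgroups.
\item For $E_8(q)$ with $d=8$ one gets $\hat S=D_4(q^2).W(G_2)$. The eight $\fS_3$-invariant unipotent characters there have inertia quotient $\fS_3\times C_2$, whose Sylow $2$-subgroup is not cyclic.
\end{itemize}
The paper covers all such graph and graph-field configurations by a direct appeal to \cite[Thm~2.4]{Ma08}, not by cyclicity or complements.
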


\begin{proof}
We give the proof by going through the possible types of simple groups $\bG$
and their various classes of $d$-split Levi subgroups $\bC$. We start by
observing that if $\bC$ is a maximal torus, then $\bS=\bC$ and
$\hat S=\norm_G(\bT_d)$, so the unique unipotent character of~$S$ is the trivial
character for which the claim clearly holds. So we may assume that $\bC$ is not
just a torus. Secondly, if $\hat\bS$ is connected and hence equal to~$\bS$,
there is nothing to prove either.
\par
Now first consider $\bG$ such that $G=\bG^F$ is of exceptional type. As
discussed in Remark~\ref{rem:all C}, the occurring groups $\hat S(\bC)$ were
determined in \cite[Thm~3.2]{Ma26} and are
listed in Tables~1 and~2 of loc.~cit. In the cases in Table~1, $\hat S$ is
either all of~$G$, or $\bC$ is a torus, a case we already discussed. Now
consider the cases in Table~2. The group $\hat S$ (and thus also
$S=(\hat\bS^\circ)^F$) is given in Column~4. In all but the 2nd to
last line, $[\bS,\bS]$ is simple and $\hat S$ acts by a group of graph or
graph-field automorphisms on $\bS$. In this case, all unipotent characters
of~$S$ extend to their inertia group in $\hat S$ by \cite[Thm~2.4]{Ma08}.
In the penultimate line, $\bS$ is of type $A_1^3.D_4$ and $\hat S$ acts by a
group of graph automorphisms $\fS_3$ diagonally on both the $A_1^3$ and
the~$D_4$-factor. Again, it is clear that any unipotent characters extends to its
inertia group.

We now turn to $G$ of classical type. Again, the groups $\hat S$ were identified
in \cite{Ma26}. For $\bG^F=\SL_n(q)$, by \cite[Thm~4.6]{Ma26} the only relevant
case has $\hat S=(\GL_d(q)\wr\fS_a)\cap G$ with $n=ad$ and $a,d\ge1$. In the
wreath product $\GL_d(q)\wr\fS_a$ all (unipotent) characters of $\GL_d(q)^a$
extend to their inertia groups \cite[Thm~4.4.3]{JK}. Since the unipotent
characters restrict irreducibly to $\SL_d(q)^a$ this yields the required
extensions for $\hat S$.
Similarly, for $\bG^F=\SU_n(q)$, by \cite[Thm~4.10]{Ma26} the only relevant
cases are $\hat S=(\GU_e(q)\wr\fS_a)\cap G$ with $n=ae$ and $a,e\ge1$ where
$e=2d$ if $d$ is odd, $e=d/2$ if $d\equiv2\pmod4$ and $e=d$ otherwise.
We can argue as for $\SL_n(q)$.

The groups $\hat S$ in the other classical types are described in
\cite[Thms~4.17, 4.18, 4.25 and 4.26]{Ma26}. In all cases, either $\bC$ is a
maximal torus, or $\hat\bS$ is connected, or $[\bS,\bS]$ is simple, or
the group is a wreath product with a symmetric group, so we may conclude by the
above arguments.
\end{proof}

\subsection{An extension of $d$-Harish-Chandra theory}   \label{subsec:dHC}
Let $(\bL,\la)$ be a unipotent $d$-cuspidal pair of $(\bG,F)$ and write
$\cE(G,(\bL,\la))$ for the $d$-Harish-Chandra series of $G$ above $(\bL,\la)$.
Recall that by $d$-Harish-Chandra theory, for every $d$-split Levi subgroup
$\bM$ of $\bG$ containing $\bL$ there is an isometry
$$I^M=I_{\bL,\la}^M:\ZZ\Irr(W_M(\bL,\la))\longrightarrow\ZZ\cE(M,(\bL,\la)),
  \qquad\vhi\mapsto\eps_\vhi\gamma_\vhi,$$
with suitable signs $\eps_\vhi\in\{\pm1\}$ such that Lusztig induction
$R_\bM^\bG$ satisfies
\begin{equation}   \label{eq:HC}
  R_\bM^\bG\circ I^M = I^G\circ\Ind_{W_M}^{W_G},
\end{equation}
where $M:=\bM^F$ and $W_M:=W_M(\bL,\la):=\norm_G(\bL,\la)/\bL^F$ (see
\cite[Thm~3.2]{BMM93}).

Now, as above, let $\bC$ be a $d$-split Levi subgroup of $\bG$ and $(\bL,\la)$
a unipotent $d$-cuspidal pair of $\bC$. Let $W_G:=W_G(\bL,\la)$
be the  relative Weyl group of $(\bL,\la)$ in $\bG$. Then the relative Weyl
group $W_C(\bL,\la)$ of $(\bL,\la)$ in $\bC$ is a parabolic subgroup of the
complex reflection group $W_G$ since $\bC$ is $d$-split in $\bG$.
As before, we get the following containments of groups
$$1\le W_C:=W_C(\bL,\la)\le W_S:=W_S(\bL,\la)\unlhd W_G=W_G(\bL,\la).$$
(The case $\bL=\cent_\bG(\bT_d)$, $\la=1$ already appeared in~(\ref{eq1})
above.)
Let $\Uch(G):=\cE(G,1)$ denote the set of unipotent characters of $G$,
and similarly $\Uch(S):=\cE(S,1)$ for $\bS=\bS(\bC)$. We define $\Uch(\hat S)$
as the set of irreducible characters of $\hat S$ lying above unipotent
characters of~$S$. Let us denote by 
$\cE(\hat S,(\bL,\la))\subseteq\Uch(\hat S)$ the set of irreducible characters
of $\hat S$ lying above the unipotent $d$-Harish-Chandra series
$\cE(S,(\bL,\la))$ of $S$ above $(\bL,\la)$. We claim that there is a
`disconnected' $d$-Harish-Chandra theory for $\hat S$ mirroring the one
inside~$\bG$ as recalled above.

To prove this, we need some preparations. Observe that for $n\ge2$ the
imprimitive complex reflection group $G(n,1,2)$ is naturally a normal reflection
subgroup of $G(2n,2,2)$ of index~2 (see the table in \cite[Prop.~3.12]{BMM99},
for example). With respect to this embedding we have:

\begin{lem}   \label{lem:ext}
 All irreducible characters of $W_1:=G(n,1,2)$ extend to $W:=G(2n,2,2)$.
\end{lem}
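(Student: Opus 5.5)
Since $|W:W_1|=2$, the plan is to reduce the extension question to invariance. A character of a normal subgroup with cyclic quotient extends as soon as it is invariant (Gallagher / the standard cyclic-quotient extension criterion). So it suffices to show that every $\chi\in\Irr(W_1)$ is $W$-stable. I will obtain this in the strongest possible form: $W$ induces only inner automorphisms on $W_1$. In fact I expect $W=W_1\cdot\bZ(W)$, so that conjugation by $W$ acts trivially on $W_1$.

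Concretely, I realise both groups as monomial $2\times2$ matrices.
\begin{itemize}
\item $W=G(2n,2,2)$ consists of the monomial matrices whose nonzero entries are $2n$-th roots of unity and whose product is an $n$-th root of unity.
\item $W_1=G(n,1,2)=C_n\wr\fS_2$ consists of the monomial matrices whose nonzero entries are $n$-th roots of unity.
\end{itemize}
Clearly $W_1\le W$, and comparing orders ($2n^2$ against $4n^2$) gives index~$2$. The first step is to check that this is, up to conjugation in $\GL_2(\mathbb C)$, the normal reflection subgroup embedding from \cite[Prop.~3.12]{BMM99}. Since the property to be proved is invariant under simultaneous conjugation, any such identification suffices.

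The key step uses a primitive $2n$-th root of unity $\zeta$. The scalar matrix $\zeta I$ has entries that are $2n$-th roots of unity, with product $\zeta^2$, which is an $n$-th root of unity. Hence $\zeta I\in W$. On the other hand $\zeta I\notin W_1$, because $\zeta$ is not an $n$-th root of unity. Since $|W:W_1|=2$, it follows that $W=W_1\langle\zeta I\rangle$. As $\zeta I$ is central in $\GL_2(\mathbb C)$, it centralises $W_1$. Consequently every $w\in W$ acts on $W_1$ as an inner automorphism of $W_1$, so every $\chi\in\Irr(W_1)$ is $W$-invariant.

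With invariance established, the extension follows from the cyclic-quotient criterion. Explicitly, $W$ is a central product of $W_1$ with $\langle\zeta I\rangle$, and $\langle\zeta I\rangle\cap W_1=\langle\zeta^2 I\rangle$. An extension of $\chi$ is obtained by sending $\zeta I$ to $\mu$, where $\mu$ is a scalar with $\mu^2=\omega_\chi(\zeta^2 I)$ and $\omega_\chi$ is the central character of $\chi$.

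The only point I expect to need care is the first step: confirming that the embedding used in the paper is really this one, and not an embedding twisted by an outer automorphism that moves $W_1$ differently inside $W$. If the BMM99 embedding turns out to be different, I would fall back on the explicit parametrisation of $\Irr(C_n\wr\fS_2)$ by pairs or multisets of characters of $C_n$ together with a sign. I would then check directly that a representative of $W\setminus W_1$ permutes these labels trivially, since it fixes the diagonal subgroup elementwise up to an inner automorphism.
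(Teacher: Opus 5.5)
Your proof is correct, but it takes a different route from the paper's. The paper argues abstractly. Since $W$ has an abelian normal subgroup of index~$2$, every character in $\Irr(W)$ has degree at most~$2$. So a degree-$2$ character of $W_1$ cannot induce irreducibly to $W$, and must extend. For the linear characters the paper compares $|W_1/W_1'|=2n$ with $|W/W'|=4n$: restriction of linear characters of $W$ to $W_1$ has kernel of order $|W/W_1|=2$, so it hits all $2n$ linear characters of $W_1$.

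You instead exhibit the structural reason, $W=W_1\,Z(W)$ with $Z(W)=\langle\zeta I\rangle$, so $W$ induces only inner automorphisms on $W_1$. This gives invariance, and hence extendibility, in one step, and it produces the extensions explicitly. It also shows directly that $W$ acts on the characters of any normal subgroup of $W_1$ only through $W_1$. That is the form in which the lemma is used, for $W_S$, in the proof of Theorem~\ref{thm:d-hc}. The price is that you need a concrete model of the embedding, whereas the paper's count needs none.

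Your worry about which embedding \cite{BMM99} uses can be dropped entirely. For $n\ge2$ both groups are irreducible, so their centres consist of scalars, and $|Z(W)|=2n$ while $|Z(G(n,1,2))|=n$. Hence no index-$2$ subgroup $H\cong G(n,1,2)$ of $W$ can contain $Z(W)$, since $Z(W)$ would then be central in $H$. So $W=H\,Z(W)$ for every such embedding. Your argument then applies verbatim, without the fallback via the parametrisation of $\Irr(C_n\wr\fS_2)$.
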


\begin{proof}
The imprimitive 2-dimensional complex reflection group $W$ has a normal abelian
subgroup of index~2, so all of its irreducible characters possess degree at
most~2 (and thus the same holds for its subgroup $W_1$). Hence, all irreducible
characters of $W_1$ of degree~2 do extend to~$W$.
The number of linear characters of $W_1$ and of $W$ equals the order of the
respective commutator factor group, and this is equal to $2n$, $4n$
respectively. Thus, the linear characters of $W_1$ do extend as well.
\end{proof}

\begin{thm}   \label{thm:d-hc}
 Let $\bG$ be a simple algebraic group with Frobenius map~$F$, let $\bC$ be a
 $d$-split Levi subgroup of $(\bG,F)$ for some $d\ge1$ and set
 $\hbS:=\hbS(\bC)$. Then for any unipotent $d$-cuspidal pair $(\bL,\la)$ of
 $(\bC,F)$ there is an isometry
 $$I^{\hat S}:=I^{\hat S}_{\bL,\la}:\ZZ\Irr(W_G(\bL,\la))\longrightarrow
   \ZZ\cE(\hat S,(\bL,\la)),\qquad \vhi\mapsto\hat\eps_\vhi\hat\ga_\vhi,$$
 with signs $\hat\eps_\vhi\in\{\pm1\}$, such that
 $$R_\bC^\bG 
   = I^G\circ(I^{\hat S})^{-1}\circ\Ind_S^{\hat S}\circ R_\bC^\bS\qquad
   \text{on $\ \cE(C,(\bL,\la))$}.$$
\end{thm}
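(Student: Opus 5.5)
The plan is to construct $I^{\hat S}$ via Clifford theory from the $d$-Harish-Chandra isometry $I^S=I^S_{\bL,\la}:\ZZ\Irr(W_S)\to\ZZ\cE(S,(\bL,\la))$, and then deduce the stated identity from the transitivity $R_\bC^\bG=R_\bS^\bG\circ R_\bC^\bS$ together with \eqref{eq:HC} applied in $\bG$ and in $\bS$.

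\textbf{Setting up the action.} First note that $\hat S=\norm_G(\bT_d)S$; this should follow from $\norm_G(\bS)=\hat S$ and a Frattini argument, since Sylow $d$-tori of $\bS$ are $S$-conjugate. By the Frattini argument again, every element of $\hat S$ stabilising $\cE(S,(\bL,\la))$ can be adjusted by an element of $S$ so that it normalises $(\bL,\la)$. Hence
$$\norm_{\hat S}(\bL,\la)/\bL^F=W_G(\bL,\la)=W_G,\qquad \norm_{\hat S}(\bL,\la)\cap S=\norm_S(\bL,\la).$$
I would then check that $I^S$ can be chosen equivariant for the action of $W_G/W_S\cong \norm_{\hat S}(\bL,\la)/\norm_S(\bL,\la)$. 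This group acts on $\Irr(W_S)$ by conjugation and on $\cE(S,(\bL,\la))$ via automorphisms of $S$, and the equivariance should hold by the uniqueness and equivariance properties of the $d$-Harish-Chandra parametrisation, which is itself compatible with Lusztig induction.

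\textbf{Clifford correspondence.} Given equivariance, for $\psi=I^S(\vhi)$ the inertia groups correspond: $\hat S_\psi/S\cong (W_G)_\vhi/W_S$. Proposition~\ref{prop:ext} says $\psi$ extends to $\hat S_\psi$. On the Weyl group side, $\vhi$ should also extend to $(W_G)_\vhi$. This would be checked from the classification of normal reflection subgroups in \cite[Prop.~3.12]{BMM99}: the quotients $W_G/W_S$ that occur are cyclic (where extension is automatic), or of the type $G(n,1,2)\unlhd G(2n,2,2)$ handled by Lemma~\ref{lem:ext}, or symmetric groups acting on wreath products, as in \cite{JK}. Gallagher's theorem then gives bijections
$$\Irr(W_G\mid\vhi)\leftrightarrow\Irr\bigl((W_G)_\vhi/W_S\bigr)\leftrightarrow\Irr(\hat S\mid\psi).$$
I would fix the choices of extensions compatibly so that the resulting map $\hat I:\Irr(W_G)\to\Irr(\hat S)$ satisfies
$$\hat I\circ\Ind_{W_S}^{W_G}=\Ind_S^{\hat S}\circ I^S,$$
where the signs $\hat\eps$ are inherited from $\eps_\vhi$; this is possible because the sign is constant on $W_G$-orbits. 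Extending linearly gives the isometry $I^{\hat S}$, since the images of distinct characters of $W_G$ are distinct irreducible characters of $\hat S$.

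\textbf{The identity.} On $\cE(C,(\bL,\la))$ write $I^C(\vhi)$ with $\vhi\in\Irr(W_C)$. Applying \eqref{eq:HC} in $\bS$ gives $R_\bC^\bS I^C(\vhi)=I^S(\Ind_{W_C}^{W_S}\vhi)$. Then
$$\Ind_S^{\hat S}R_\bC^\bS I^C(\vhi)=I^{\hat S}\bigl(\Ind_{W_C}^{W_G}\vhi\bigr).$$
Applying $I^G\circ(I^{\hat S})^{-1}$ yields $I^G(\Ind_{W_C}^{W_G}\vhi)$, which equals $R_\bC^\bG I^C(\vhi)$ by \eqref{eq:HC} in $\bG$. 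This is the claimed identity.

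\textbf{Main obstacle.} I expect the hardest step to be the equivariance of $I^S$ under $W_G/W_S$, together with the consistent choice of extensions on both sides so that induction intertwines. Graph automorphisms may act nontrivially on unipotent characters (for instance in type $D_4$), and one must match these actions with the action on $\Irr(W_S)$. I would first reduce, via the case list in the proof of Proposition~\ref{prop:ext}, to the wreath-product and graph-automorphism cases and verify equivariance directly in each.
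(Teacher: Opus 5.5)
This is essentially the paper's argument: reduce via \eqref{eq:HC} in $\bG$ and in $\bS$ to $I^{\hat S}\circ\Ind_{W_S}^{W_G}=\Ind_S^{\hat S}\circ I^S$, obtain this by Clifford theory from matching inertia groups (equivariance of $I^S$) and extendibility on both sides (Proposition~\ref{prop:ext} for $\hat S$), and verify these case by case. The paper does this via \cite[Thm~4.6]{Ma07}, the wreath products, Lemma~\ref{lem:ext}, and inspection for $E_7$, $d=4$; there $W_G/W_S\cong\fS_3$ is neither cyclic nor on your list, but it has cyclic Sylow subgroups, so extension still holds. One slip: $\bS$ is in general not a Levi subgroup of $\bG$, so the ``transitivity $R_\bC^\bG=R_\bS^\bG\circ R_\bC^\bS$'' in your first line is not meaningful, though your actual derivation of the identity never uses it.
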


Note that in the special case $\bS=\cent_\bG(\bT_d)$ (and thus $\bC=\bS$ and
$\hat S=\norm_G(\bT_d)$) this is just the usual assertion of $d$-Harish-Chandra
theory.
 
\begin{proof}
As $\bC$ is $d$-split in $\bG$, we have
$R_\bC^\bG\circ I^C = I^G\circ\Ind_{W_C}^{W_G}$ by $d$-Harish-Chandra theory
for $\bG$, see~(\ref{eq:HC}). Thus, composing with the signed bijections
$(I^G)^{-1}$ and $I^C$, it suffices to show the existence of a signed bijection
$I^{\hat S}$ with
$$I^{\hat S}\circ\Ind_{W_C}^{W_G} = \Ind_S^{\hat S}\circ R_\bC^\bS\circ I^C.$$
Now, since $\bC$ is $d$-split in $\bS$ as well, (\ref{eq:HC}) for
$\bS$ gives $R_\bC^\bS\circ I^C = I^S\circ\Ind_{W_C}^{W_S}$, so our claim
reduces to showing the existence of a bijection $I^{\hat S}$ with signs with
$$I^{\hat S}\circ\Ind_{W_S}^{W_G} = \Ind_S^{\hat S}\circ I^S.$$
That is, induction from $S$ to $\hat S$ decomposes as induction from $W_S$
to $W_G$, up to signs. Now $W_S$ is normal in $W_G$ and $W_G/W_S\cong \hat S/S$.
Thus the claim follows if for all $\vhi\in\Irr(W_S)$, $\vhi$~and
$I^S(\vhi)\in\cE(S,(\bL,\la))$ have the same inertia group and do extend to
it. The extension property for $S\le\hat S$ is
shown in Proposition~\ref{prop:ext}. Furthermore, if
$[\bS,\bS]$ is simple and $\hbS$ acts by graph automorphisms, generalised
$d$-Harish-Chandra theory for $\hat\bS$ was proved in \cite[Thm~4.6]{Ma07}.

By the classification of the groups $\hat\bS$ in \cite{Ma26} for $G$ of
exceptional type it remains to discuss the case $\bG=E_7$ with $d=4$. Here we
have $C=\Phi_4A_1(q)^3A_1(q^2)$, whose unipotent 4-cuspidal pairs are of the
form $(\bL,\la)$ with $\bL=\cent_\bG(\bT_4)$ and $\la$ running over the
unipotent characters of $A_1(q)^3$. The relative Weyl groups of these
$(\bL,\la)$ are then $W_S=G(4,2,2)$, and $W_G=G_8$ if $\la$ is $W_d$-invariant,
and $W_G=G(4,1,2)$ if not (see \cite[Tab.~1]{BMM93}). It can then be verified
by inspection that the signed bijections $I^S:\Irr(W_S)\to\cE(S,(\bL,\la))$
are such that corresponding inertia groups in $W_G$, respectively in~$\hat S$
are isomorphic. Furthermore, the characters of $W_S$ extend to their inertia
groups as $W_G/W_S$ has cyclic Sylow subgroups.

For $G$ of classical type, the remaining cases are the wreath products, as well
as the groups $\GL_n(q).2$, $\GU_n(q).2$ and $\GO_n^+(q^2).2\cap G$ in
$G=\SO_{2n}^+(q)$. First assume $G=\SL_n(q)$ with $S=\GL_d(q)^a\cap G$ and
$\hat S=(\GL_d(q)\wr\fS_a)\cap G$ where $n=ad$. Here, we have
$C=(\GL_1(q^d)^{a-1}\times\GL_d(q))\cap G$ (see \cite[Thm~4.6]{Ma26}). As noted
above, the
assertion is clear for the minimal $d$-cuspidal pair $(\cent_\bG(\bT_d),1)$.
All other unipotent $d$-cuspidal pairs in $\bC$ are of the form $(\bL,\la)$ with
$\bL=\bC$ and $\la=(1^{\boxtimes a-1}\boxtimes\la_a)$ where $\la_a$ is labelled
by a $d$-core partition $\mu_a$ of~$d$ (see e.g. \cite[Exmp.~3.5.29]{GM20}).
For any such pair,
$$W_S=W_S(\bL,\la)=C_d^{a-1},\quad
  W_G=W_G(\bL,\la)=G(d,1,a-1)=C_d\wr\fS_{a-1}.$$
Let $\chi\in\cE(\GL_d(q)^a,(\bL,\la))$, then $\chi$ is labelled by an $a$-tuple
of partitions $(\mu_1,\ldots,\mu_a)$ where $\mu_1,\ldots,\mu_{a-1}\vdash d$
have trivial $d$-core and $\mu_a$ is as above. That is,
$\mu_1,\ldots,\mu_{a-1}$ are $d$-hooks.
Let $\vhi\in\Irr(C_d^{a-1})$ be parametrised by $(\mu_1,\ldots,\mu_{a-1})$.
Then the stabiliser of $\chi$ in $\hat S/S=\fS_a$ is the same Young subgroup of
$\fS_{a-1}$ as the stabiliser of $\vhi$ in $W_G=C_d\wr\fS_{a-1}$, and by the
character theory of wreath products, $\vhi$ extends to this stabiliser, as
wanted. The situation for the wreath products in the other classical types is
entirely analogous.

For $\hat S=\GL_n(q).2$, $\GU_n(q).2$ the claim follows by \cite[Thm~4.6]{Ma07}.

Finally, assume $G=\SO_{2n}^+(q)$ from \cite[Thm~4.25(8)]{Ma26}. Thus,
$d=n\equiv0\pmod4$, $C=\GU_2(q^e)$ with $e:=d/2$, $S=\SO_n^+(q^2)$ and $\hat S$
is an extension of $\GO_n^+(q^2)$ of degree~2. The unique unipotent
$d$-cuspidal pair of $\bC$ is the minimal one, $(\cent_\bG(\bT_d),1)$. So
$W_S=G(n,2,2)$ in $W_G=G(2n,2,2)$ (see \cite[Exmp.~3.5.29]{GM20}). Let
$W_1:=G(n,1,2)\unlhd G(2n,2,2)$ (see \cite[Prop.~3.12]{BMM99}). Then $W_1$
acts on $\Irr(W_S)$ as $\GO_n^+(q^2)$ acts on $\cE(\SO_n^+(q^2),(\bT_d,1))$
by \cite[Thm~4.6]{Ma07}. By Lemma~\ref{lem:ext} all characters of $W_1$ extend
to $W_G$. Thus, the inertia groups of characters of $W_S$ in $W_G$ agree with
those of the corresponding characters in $\cE(\SO_n^+(q),(\bT_d,1))$.
\end{proof}

\section{The subnormaliser conjecture for unipotent characters}   \label{sec:conj}

In this section, we prove Theorems~\ref{thm:main} and~\ref{thm:mainB}.
Let $\bG$ be a simple algebraic group with a Frobenius endomorphism
$F:\bG\to\bG$.

\begin{rem}   \label{rem:defchar}
 The assertion of Theorem~\ref{thm:main} holds trivially when $\ell$ is the
 defining characteristic of $\bG$. Indeed, since Sylow $\ell$-subgroups of $G$
 are assumed to be abelian, $\bG$ is of type $A_1$, and all non-trivial
 $\ell$-elements $x$ of $G$ are regular unipotent and hence picky (see
 \cite[Thm~3.2]{Ma25}). Thus $\Sub_G(x)=B$ is a Borel subgroup of $G$. The only
 unipotent character of $G$ not vanishing on non-trivial $\ell$-elements of $G$
 is the trivial character, and so all of our claims hold trivially by letting
 $\Uch(B):=\{1_B\}$ and $f:1_G\mapsto 1_B$.
\end{rem}

For the remainder of the section we may and will hence assume that $\ell$ is a
non-defining prime for~$\bG$.

\subsection{Value preserving bijections}

\begin{thm}   \label{thm:bij}
 Let $\bG$ be simple and $F:\bG\to\bG$ a Frobenius map with respect to an
 $\FF_q$-structure. Let $\ell{\not|}q$ be a prime, $d:=e_\ell(q)$ and
 $\bT_d\le\bG$ a Sylow $d$-torus. Then for every $d$-split Levi subgroup~$\bC$
 of $(\bG,F)$ and unipotent $d$-cuspidal pair $(\bL,\la)$ of $\bC$ there is a
 bijection
 $$f:\cE(G,(\bL,\la))\to\cE(\hat S(\bC),(\bL,\la))$$
 such that for every $\ell$-element $x\in\bT_d^F$ with
 $\cent_\bG^\circ(x)\le\bC$
 $$f(\chi)(xy) = \pm \chi(xy)\qquad
   \text{for every $\chi\in\cE(G,(\bL,\la))$ and $y\in \cent_G(x)_{\ell'}$}.$$
 This bijection preserves $\ell$-parts of character degrees.
\end{thm}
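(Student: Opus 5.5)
The bijection will be defined through the relative Weyl group. Using the isometries $I^G=I^G_{\bL,\la}$ of $d$-Harish-Chandra theory and $I^{\hat S}=I^{\hat S}_{\bL,\la}$ from Theorem~\ref{thm:d-hc}, both of which are signed bijections with domain $\Irr(W_G(\bL,\la))$, I set
$$f(\eps_\vhi\ga_\vhi):=\hat\ga_\vhi\qquad\text{for }\vhi\in\Irr(W_G(\bL,\la)).$$
Equivalently, $f$ is $I^{\hat S}\circ(I^G)^{-1}$ with the signs removed. The task is then to compare values at $xy$.

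\textbf{Step 1: reduction to Lusztig restriction.} Let $x\in\bT_d^F$ be an $\ell$-element with $\cent^\circ_\bG(x)\le\bC$, and let $y\in\cent_G(x)_{\ell'}$. I will use the Curtis--Schewe type character formula for Lusztig restriction (as in \cite{MS25}). Since $x$ is an $\ell$-element, hence semisimple, with $\cent^\circ_\bG(x)\le\bC$, this formula gives
$$\chi(xy)=(\tw*R_\bC^\bG\chi)(xy)$$
for every class function $\chi$ on $G$. The same formula applies inside the connected group $\bS$, giving $\psi(xy)=(\tw*R_\bC^\bS\psi)(xy)$ for class functions $\psi$ on $S$.

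For $\hat S$ I will use instead the induction formula. Since $xy\in C\le S$ and $S\unlhd\hat S$,
$$(\Ind_S^{\hat S}\psi)(xy)=\frac1{|S|}\sum_{g\in\hat S}\psi^g(xy).$$

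\textbf{Step 2: adjoint identity.} Take adjoints of the identity in Theorem~\ref{thm:d-hc}, using that $I^G$ and $I^{\hat S}$ are isometries and that Frobenius reciprocity turns $\Ind_S^{\hat S}$ into restriction. This yields
$$\tw*R_\bC^\bG\circ I^G=\tw*R_\bC^\bS\circ\mathrm{Res}^{\hat S}_S\circ I^{\hat S}$$
on $\ZZ\Irr(W_G)$, after projecting to $\cE(C,(\bL,\la))$.

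The projection causes no loss. By $d$-Harish-Chandra theory, $\tw*R_\bC^\bG$ maps $\cE(G,(\bL,\la))$ into $\ZZ\cE(C,(\bL,\la))$, and the same holds for $\tw*R^\bS_\bC$ on $\cE(S,(\bL,\la))$. This uses the disjointness of $d$-Harish-Chandra series for unipotent characters, via $R_\bC$ and \cite[Thm~3.2]{BMM93}.

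\textbf{Step 3: evaluation.} Combining Steps~1 and~2 gives
$$\eps_\vhi\ga_\vhi(xy)=\tw*R_\bC^\bG(\eps_\vhi\ga_\vhi)(xy)=\hat\eps_\vhi\,\tw*R_\bC^\bS(\hat\ga_\vhi|_S)(xy)=\hat\eps_\vhi\,\hat\ga_\vhi(xy).$$
The last equality applies Step~1 inside $\bS$ to the class function $\hat\ga_\vhi|_S$, which is legitimate because $\cent_\bS^\circ(x)\le\cent^\circ_\bG(x)\le\bC$. Hence $f(\chi)(xy)=\pm\chi(xy)$.

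\textbf{Degrees.} For the statement on $\ell$-parts of degrees, take $x=1$. The hypothesis $\cent^\circ_\bG(1)=\bG\le\bC$ fails in general, so this step cannot be done by evaluation and needs a separate argument. I will use the degree formulas of $d$-Harish-Chandra theory, $\ga_\vhi(1)=\pm|G:L|_{p'}\,\la(1)\,\vhi(1)/|W_G|\cdot(\text{unit at }\Phi_d)$, together with the analogous formula for $\hat S$ coming from the extension and inertia analysis in the proof of Theorem~\ref{thm:d-hc}, namely $\hat\ga_\vhi(1)=|\hat S:S|\cdot I^S(\cdot)(1)\cdot(\ldots)$.

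Since $e_\ell(q)=d$, and $\ell$ divides $\Phi_d(q)$ for all $\ell\nmid q$ with $e_\ell(q)=d$, the $\ell$-parts are controlled by the $\Phi_d$-parts and by $|W_G|_\ell$. The key equality is $|G|_{\Phi_d}=|S|_{\Phi_d}$, which holds since $\bT_d\le\bS$. Combined with $|\hat S:S|=|W_G:W_S|$, this should give $\chi(1)_\ell=f(\chi)(1)_\ell$.

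\textbf{Expected obstacle.} I expect the main difficulty to be Step~1: justifying the Curtis--Schewe formula for $\tw*R_\bC$ at elements $xy$ with $y$ an arbitrary $\ell'$-element of $\cent_G(x)$, including when $\cent_\bG(x)$ is disconnected. I would try the Digne--Michel character formula, restricting to $\cent^\circ_\bG(x)$, and for disconnected $\cent_\bG(x)$ pass through $\cent_G(x)$-conjugacy. The degree comparison may also need care at small $\ell$ dividing $|W_G|$.
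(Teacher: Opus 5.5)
\textbf{Value statement.} Your definition of $f$ and your argument for the values agree with the paper's proof. The paper writes $\chi(xy)=\sum_\mu\langle\chi,R_\bC^\bG(\mu)\rangle\,\mu(xy)$ and $\hat\chi(xy)=\sum_\mu\langle\hat\chi,\Ind_S^{\hat S}R_\bC^\bS(\mu)\rangle\,\mu(xy)$, with $\mu\in\cE(C,(\bL,\la))$, and compares coefficients via Theorem~\ref{thm:d-hc}. That is exactly your adjoint identity; the induction formula in your Step~1 is never used. As for your expected obstacle, the paper only evaluates at $y\in\bC^F_{\ell'}$. There Curtis--Schewe applies directly, since $\cent^\circ_\bG(xy_s)\le\cent^\circ_\bG(x)\le\bC$ with $y_s$ the semisimple part of $y$. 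Elements $y\in\cent_G(x)\setminus C$ are not treated in the paper either.

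\textbf{The gap: degrees.} You leave this at ``this should give''. The $d$-Harish-Chandra degree formula only determines $\mathrm{Deg}\,\chi/\mathrm{Deg}\,\la$ at $\zeta_d$. A congruence modulo a prime $\mathfrak l\mid\ell$ fixes the $\ell$-part of the value at $q$ only when that value is an $\ell$-unit. This fails as soon as $\ell\mid\vhi(1)$ or $\ell\mid|W_G|$, or factors $\Phi_{d\ell^k}$ with $k\ge1$ occur in $|G:L|$ or $\mathrm{Deg}\,\chi$. Such factors do contribute to $\ell$-parts. So your claim that $\ell$-parts are controlled by $\Phi_d$-parts and $|W_G|_\ell$ is false as stated. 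For the same reason $|G|_{\Phi_d}=|S|_{\Phi_d}$ is not the relevant input, since $|G:S|_\ell$ can still be nontrivial.

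Moreover, $|\hat S:S|=|W_G:W_S|$ fails in general. In the wreath-product case in the proof of Theorem~\ref{thm:d-hc}, $\hat S/S\cong\fS_a$ but $W_G/W_S\cong\fS_{a-1}$. Matching inertia groups only gives $\hat\chi(1)/\chi'(1)=(|\hat S:S|/|W_G:W_S|)\cdot\vhi(1)/\vhi'(1)$, for $\vhi'\in\Irr(W_S)$ below $\vhi$ and $\chi'=\pm I^S(\vhi')$.

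\textbf{How to close it.} The paper quotes \cite[Prop.~6.5]{KMS} as $(\chi(1)/\la(1))_\ell=\vhi(1)_\ell$ and $(\chi'(1)/\la(1))_\ell=\vhi'(1)_\ell$. It combines these with $\hat\chi(1)/\chi'(1)=\vhi(1)/\vhi'(1)$ from Theorem~\ref{thm:d-hc}.

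Your own route can also be completed, as follows.
\begin{enumerate}
\item Take a prime $\mathfrak l$ of $\ZZ[\zeta_d]$ containing $q-\zeta_d$. Check that $v_\ell(\Phi_e(q))=v_{\mathfrak l}(\Phi_e(\zeta_d))$ for all $e\ne d$. For odd $\ell$ both sides equal $1$ if $e=d\ell^k$ with $k\ge1$, and $0$ otherwise.
\item The degree formula then gives $(\chi(1)/\la(1))_\ell=|G:\norm_G(\bL,\la)|_\ell\,\vhi(1)_\ell$, and likewise in $S$.
\item Combine this with the corrected Clifford relation and $|W_G:W_S|=|\norm_G(\bL,\la):\norm_S(\bL,\la)|$. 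This yields $\chi(1)_\ell=|G:\hat S|_\ell\cdot\hat\chi(1)_\ell$.
\end{enumerate}
So the input really needed is $\ell\nmid|G:\hat S|$. This holds, for example, whenever $\norm_G(\bT_d)\le\hat S$ contains a Sylow $\ell$-subgroup of $G$.

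The same computation shows that the two identities quoted in the paper are exact only under two conditions. First, $\norm_G(\bL,\la)$ and $\norm_S(\bL,\la)$ must contain Sylow $\ell$-subgroups of $G$, respectively $S$. Second, $\hat S$ must fix the $S$-class of $(\bL,\la)$. In general each identity picks up a correction factor, and these cancel exactly when $\ell\nmid|G:\hat S|$.
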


\begin{proof}
For simplicity write $\bS:=\bS(\bC)$ and $\hbS:=\hbS(\bC)$.
Let $(\bL,\la)$ be a unipotent $d$-cuspidal pair of $\bC$ (and hence also of
$\bS$ and $\bG$). We claim that $f:=\pm I^{\hat S}\circ(I^G)^{-1}$, a bijection
according to Theorem~\ref{thm:d-hc} has the desired properties. Here, the sign
is chosen such that $f(\chi)(1)>0$ for $\chi\in\cE(G,(\bL,\la))$. So let
$\chi\in\cE(G,(\bL,\la))$ and let $\hat\chi:=f(\chi)\in\cE(\hat S,(\bL,\la))$.
Then for any $g=xy$, with $x\in(\bT_d)_\ell^F$ an $\ell$-element with
$\cent_\bG^\circ(x)\le\bC$ and $y\in\bC_{\ell'}^F$, we have
$$\hat\chi(g)=\hat\chi|_S(g)
  =\sum_{\ga\in\cE(S,(\bL,\la))}\langle \hat\chi|_S,\ga\rangle\,\ga(g)$$
by the definition of $\cE(\hat S,(\bL,\la))$.
Now for any $\ga\in\cE(S,(\bL,\la))$, since
$\cent_\bS^\circ(g)\le \cent_\bG^\circ(x)\le\bC$ we have
$$\ga(g)=\tw*R_\bC^\bS(\ga)(g)
  =\sum_{\mu\in\cE(C,(\bL,\la))}\blangle\tw*R_\bC^\bS(\ga),\mu\brangle\,\mu(g)
  =\sum_{\mu\in\cE(C,(\bL,\la))}\blangle\ga,R_\bC^\bS(\mu)\brangle\,\mu(g),$$
by the Curtis--Schewe formula \cite[Cor.~3.3.13]{GM20}, where the second
equality follows as all constituents of $\tw*R_\bC^\bS(\ga)$ lie in
$\cE(C,(\bL,\la))$ by $d$-Harish-Chandra theory, and the third by adjointness.
Inserting this into the previous formula, we find
\begin{equation}   \label{eq:CS2}
\begin{aligned}
 \hat\chi(g)=&\sum_{\ga\in\cE(S,(\bL,\la))}\langle \hat\chi|_S,\ga\rangle
  \sum_{\mu\in\cE(C,(\bL,\la))}\blangle\ga,R_\bC^\bS(\mu)\brangle\,\mu(g)\\
  =&\sum_{\mu\in\cE(C,(\bL,\la))}\Big\langle\hat\chi|_S,\sum_{\ga\in\cE(S,(\bL,\la))}
  \blangle\ga,R_\bC^\bS(\mu)\brangle\,\ga\Big\rangle\,\mu(g)\\
  =&\sum_{\mu\in\cE(C,(\bL,\la))}\blangle\hat\chi|_S,R_\bC^\bS(\mu)\brangle\,\mu(g)
  =\sum_{\mu\in\cE(C,(\bL,\la))}\blangle\hat\chi,\Ind_S^{\hat S}(R_\bC^\bS(\mu))\brangle\,\mu(g),
\end{aligned}\end{equation}
again using that all constituents of $R_\bC^\bS(\mu)$ lie in $\cE(S,(\bL,\la))$
and Frobenius reciprocity. On the other hand, again by the Curtis--Schewe
formula applied in $G$, for $\chi$ we get the value
\begin{equation}   \label{eq:CS}
  \chi(g)=\tw*R_\bC^\bG(\chi)(g)
  =\sum_{\mu\in\cE(C,(\bL,\la))}\blangle\tw*R_\bC^\bG(\chi),\mu\brangle\,\mu(g)
  =\sum_{\mu\in\cE(C,(\bL,\la))}\blangle\chi,R_\bC^\bG(\mu)\brangle\,\mu(g).
\end{equation}
As $\hat\chi=f(\chi)=\pm(I^{\hat S}\circ (I^G)^{-1})(\chi)$, comparison of
(\ref{eq:CS}) with~(\ref{eq:CS2}) shows $\chi(g)=\pm\hat\chi(g)$ by
Theorem~\ref{thm:d-hc}, which establishes the existence of a bijection~$f$
as claimed.

It remains to compare the $\ell$-parts of the character degrees. Let
$\vhi\in\Irr(W_G(\bL,\la))$ and $\chi:=\pm I^G(\vhi)\in\cE(G,(\bL,\la))$,
$\hat\chi:=\pm I^{\hat S}(\vhi)\in\cE(\hat S,(\bL,\la))$ so that
$\hat\chi=f(\chi)$. By the degree formula for $d$-Harish-Chandra series
\cite[Cor.~4.6.25]{GM20} the degree polynomials of $\la$ and of $\chi$ are
divisible by the same power of $\Phi_d$, and the same holds for the degree
polynomial of any character $\chi'\in\cE(S,(\bL,\la))$. Now by
\cite[Prop.~6.5]{KMS} (the proof shows that it holds for any $d$-Harish-Chandra
series) we have $(\chi(1)/\la(1))_\ell=\vhi(1)_\ell$, and similarly
$(\chi'(1)/\la(1))_\ell=\vhi'(1)_\ell$, where $\chi'=\pm I^S(\vhi')$. 
By Theorem~\ref{thm:d-hc} we also have $\hat\chi(1)/\chi'(1)=\vhi(1)/\vhi'(1)$
if $\vhi'$ lies under $\vhi$ (and so $\chi'$ under $\chi$), hence
$$\chi(1)_\ell=(\vhi(1)\la(1))_\ell=(\vhi'(1)\la(1)\hat\chi(1)/\chi'(1))_\ell
  =\hat\chi(1)_\ell,$$
showing preservation of $\ell$-parts of degrees.
\end{proof}

In \cite{MS25} we considered the special case where
$\bS=\bC=\cent_\bG(\bT_d)$ and $\hbS=\norm_\bG(\bT_d)$. Note that in contrast
to that picky case, here it can happen that $\cE(G,(\bL,\la))$ contains
characters of degree divisible by $\ell$; 
this is the case when their Harish-Chandra vertex $(\bL,\la)$ is such that
$\bL$ is not a minimal $d$-split Levi subgroup or if its relative Weyl group
has such characters of degree divisible by~$\ell$. In particular the bijection
constructed above is more than just a McKay bijection.

We can now show the existence of bijections preserving character values. As for
$\Irr(G)$, for an element $x\in G$ we denote by $\Uch^x(G)$ those unipotent
characters that do not vanish at $x$. 
The following is the first part of Theorem~\ref{thm:main}:

\begin{thm}   \label{thm:MR}
 Let $\bG$ be a simple linear algebraic group with Steinberg map $F:\bG\to\bG$.
 Let $\ell$ be a prime such that Sylow $\ell$-subgroups of $G:=\bG^F$ are
 abelian. Then there is a bijection
 $$f:\Uch^x(G)\to\Uch^x(\Sub_G(x))$$
 as in Conjecture~\ref{conj:AN}, depending only on $\Sub_G(x)$ but not on~$x$,
 such that
 $$f(\chi)(xy) = \pm \chi(xy)\qquad
   \text{for every $\ell'$-element $y\in\cent_G(x)_{\ell'}$}.$$
 In particular, the strong form of the Moret\'o conjecture holds at $\ell$
 for unipotent characters of $\bG^F$.
\end{thm}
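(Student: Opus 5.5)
The plan is to reduce Theorem~\ref{thm:MR} to Theorem~\ref{thm:bij}, using the determination of subnormalisers in \cite{Ma26} (Remark~\ref{rem:gen subn}).

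\textbf{Reductions.} By Remark~\ref{rem:defchar} we may assume $\ell\nmid q$, and we set $d:=e_\ell(q)$. If $x=1$ then $\Sub_G(x)=G$ and we take $f$ to be the identity, so assume $x\ne1$. Since Sylow $\ell$-subgroups of $G$ are abelian, they lie in $\norm_G(\bT_d)$ for a Sylow $d$-torus $\bT_d$, namely $(\bT_d^F)_\ell$ is a Sylow $\ell$-subgroup. Conjugating, we may therefore take $x\in(\bT_d^F)_\ell$. Put $\bC:=\cent_\bG^\circ(x)$. This is a $d$-split Levi subgroup: it is the centraliser of the $d$-torus generated by the $\ell$-element $x$, which is standard in the abelian Sylow setting (see \cite{Ma26}). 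By Remark~\ref{rem:gen subn} and \cite[Cor.~4.29]{Ma26} we have $\Sub_G(x)\le\hat S:=\hbS(\bC)^F$.

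\textbf{Case $\Sub_G(x)=\hat S$.} Define $\Uch(\hat S)$ as in Section~\ref{subsec:dHC}. Every unipotent character $\chi$ of $G$ with $\chi(x)\ne0$ lies in a series $\cE(G,(\bL,\la))$ with $(\bL,\la)$ a $d$-cuspidal pair of $\bC$. Indeed, by the Curtis--Schewe formula \eqref{eq:CS}, $\chi(x)\ne0$ forces $\tw*R_\bC^\bG(\chi)\ne0$, and by $d$-Harish-Chandra theory the constituents then lie above $d$-cuspidal pairs of $\bC$. The same argument, run in the form \eqref{eq:CS2}, applies to $\hat S$.

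We glue the bijections of Theorem~\ref{thm:bij} over all unipotent $d$-cuspidal pairs $(\bL,\la)$ of $\bC$, taken up to $\norm_C$-conjugacy (equivalently, those series meeting $\cE(C,\,\cdot\,)$). This gives a bijection
$$f:\bigcup\cE(G,(\bL,\la))\to\bigcup\cE(\hat S,(\bL,\la)).$$
Since $f(\chi)(xy)=\pm\chi(xy)$, non-vanishing at $x$ is preserved, so $f$ restricts to $\Uch^x(G)\to\Uch^x(\hat S)$. Theorem~\ref{thm:bij} also gives preservation of $\ell$-parts of degrees and of values up to sign. Value preservation implies $\QQ(\chi(x))=\QQ(f(\chi)(x))$, which is property~\ref{fields}. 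The construction depends only on $\bC$. Moreover $\bC$ is determined by $\Sub_G(x)$: it equals $\cent_\bG(\bT_d')$ for the largest central $d$-torus of $\hbS^\circ$. Hence $f$ depends only on $\Sub_G(x)$.

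\textbf{Case $\Sub_G(x)<\hat S$ (main obstacle).} This requires going through the cases of \cite{Ma26} where the inclusion is proper. There I would define $\Uch(\Sub_G(x))$ as the characters lying over $\Uch(S)$. Here $S\le\Sub_G(x)$, since $S$ is generated by conjugates of $\bC^F$ by Sylow normalisers, which are contained in the subnormaliser. One then reruns Theorem~\ref{thm:d-hc} with $\hat S$ replaced by the intermediate group $H=\Sub_G(x)$: $H/S$ corresponds to a subgroup $W_H$ with $W_S\le W_H\le W_G$, and one needs the isometry relative to $\Ind_{W_S}^{W_H}$.

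The difficulty is that $W_H$ need not contain all of $W_G$, so the target $\Irr(W_H)$ differs from $\Irr(W_G)$. A bijection $\Uch^x(G)\to\Uch^x(H)$ must then come from matching the non-vanishing sets directly, since the full series no longer correspond. I would try to show, case by case, that in these exceptional configurations the characters of $W_G$ with $I^G(\vhi)(x)\ne0$ correspond under restriction to $W_H$ to characters of $W_H$ in a value-preserving way. Values are controlled by \eqref{eq:CS}: the value at $xy$ depends only on $\Ind_{W_C}^{W_G}$ multiplicities, which are unchanged if $W_C\le W_H$ and the relevant induced characters agree after restriction. If that fails, a finite inspection of the listed cases remains.
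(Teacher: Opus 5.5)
Your generic case $\Sub_G(x)=\hat S$ follows the paper. Curtis--Schewe forces every character in $\Uch^x(G)$ and in $\Uch^x(\hat S)$ into a series above a unipotent $d$-cuspidal pair of $\bC$, and Theorem~\ref{thm:bij} is then applied series by series.

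There are three smaller problems outside the main gap.
\begin{itemize}
\item The gluing must run over $G$-classes of such pairs, not $C$-classes, or the same series is counted twice. These $G$-classes coincide with $\hat S$-classes: the paper says so, and it also follows from a Frattini argument using $\norm_G(\bT_d)\le\hat S$.
\item Your claim that $\bC$ is the centraliser of the largest central $d$-torus of $\bS$ is false. For $\SL_n(q)$ with $\bS=\GL_d^a\cap\SL_n$ and $d>1$, that torus is trivial. The claim is not needed anyway.
\item You silently drop the Suzuki and Ree groups. There $F$ is not a Frobenius map, $e_\ell(q)$ is not defined, and Theorems~\ref{thm:d-hc} and~\ref{thm:bij} are not available as stated. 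The paper treats them separately via \cite[Thm~5.11]{Ma25}: either $\Sub_G(x)=G$, or $x$ is picky with $\Sub_G(x)=\norm_G(\bT_\Phi)$, and ordinary $\Phi$-Harish-Chandra theory suffices.
\end{itemize}

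The genuine gap is the case $\Sub_G(x)\ne\hat S$. Your plan there rests on $S\le\Sub_G(x)$, argued by saying that $S$ is generated by conjugates of $C$. But $\bS=\langle\bC^w\mid w\in W_d\rangle$ holds only as an algebraic group. The finite group $S=\bS^F$ need not be generated by the finite groups $(\bC^w)^F$, which are conjugates of $C$ under $\norm_G(\bT_d)$.

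In fact, by the classification in \cite{Ma26}, the non-generic case occurs exactly once: $G=G_2(4)$, $\ell=5$, with $\Sub_G(x)=J_2$. Since $J_2$ is maximal in the simple group $G_2(4)$ and $J_2=\Sub_G(x)<\hat S$, one gets $\hat S=G$. Then $S=G$, because $1\ne S\unlhd\hat S$. So $S\not\le\Sub_G(x)$, and your set-up collapses:
\begin{itemize}
\item there is no intermediate group $S\le H\le\hat S$;
\item there is no $W_H$ with $W_S\le W_H\le W_G$;
\item there is no ``unipotent characters lying over $\Uch(S)$'' for the sporadic group $J_2$;
\item there is no $d$-Harish-Chandra theory to rerun.
\end{itemize}
Your strategy therefore cannot work in the only case where it is needed. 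The missing ingredient is this classification from \cite{Ma26}. Once you have it, the single exception is settled by direct inspection of the known character tables of $G_2(4)$ and $J_2$, which is what the paper does.
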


\begin{proof}
Let $\bG$ and $G=\bG^F$ be as in the statement. Let $\ell$ be a prime such
that Sylow $\ell$-subgroups of $G$ are abelian. By Remark~\ref{rem:defchar}
we may assume $\ell$ is different from the defining characteristic of $\bG$.
Let $x\in G$ be a non-trivial $\ell$-element. If $F$ is not a Frobenius map,
then $G$ is a Suzuki or Ree group. In this case, by \cite[Thm~5.11]{Ma25}
either $\Sub_G(x)=G$ (and there is nothing to prove), or $x$ is picky,
$\cent_G(x)=\bT_\Phi^F$ and $\Sub_G(x)=\norm_G(\bT_\Phi)$ for some Sylow
$\Phi$-torus $\bT_\Phi\le\bG$, where $\Phi$ is a cyclotomic polynomial
determined by $\ell$. Here, usual $\Phi$-Harish-Chandra theory provides a
signed bijection $\Irr(W_G(\bT_\Phi))\to\cE(G,(\bT_\Phi,1))$, to which the
arguments in the proof of Theorem~\ref{thm:bij} apply verbatim. Here, all
characters are of $\ell'$-degree.

So now assume $F$ is a Frobenius map with respect to an $\FF_q$-structure. Let
$d:=e_\ell(q)$ and $\bT_d\le\bG$ a Sylow $d$-torus. By \cite[Prop.~2.4]{Ma14}
the assumption that Sylow $\ell$-subgroups of $G$ are abelian implies that
$\bT_d$ contains a
Sylow $\ell$-subgroup of $G:=\bG^F$, and furthermore that the centraliser
$\bC:=\cent_\bG(x)$ is a $d$-split Levi subgroup of~$\bG$. By the results in
\cite{Ma26}, the only situation where $\Sub_G(x)\ne\hat S:=\hat S(\bC)$ occurs
in~$G=G_2(4)$, with $\ell=5$, where $\Sub_G(x)$ is the sporadic Janko group
$J_2$. In this case, the assertion of Theorem~\ref{thm:main} can be checked
from the known character tables.

Thus, finally, we may assume $\Sub_G(x)=\hat S$, that is, the subnormaliser
coincides with the generic subnormaliser. Let $\ga\in\Uch^x(G)$ and $(\bL,\la)$
be a unipotent $d$-cuspidal pair such that $\ga\in\cE(G,(\bL,\la))$. By the
Curtis--Schewe formula~(\ref{eq:CS}), $\ga(x)\ne0$ implies that $(\bL,\la)$ is
contained in $\bC$, up to conjugation, and similarly any
$\hat\ga\in\Uch^x(\Sub_G(x))$ must lie in $\cE(\hat S,(\bL,\la))$ for such an
$(\bL,\la)$, by (\ref{eq:CS2}). Thus, we are in the situation of
Theorem~\ref{thm:bij}. By inspection of the structures from \cite{Ma26} the
$\hat S$-classes and the $G$-classes
of unipotent $d$-cuspidal pairs of $\bC$ are in bijection in all cases, and for
each such, Theorem~\ref{thm:bij} provides a bijection preserving $\ell$-parts
of character degrees and values up to sign, which implies~\ref{values} and
thus~\ref{fields} of Conjecture~\ref{conj:AN}.
\end{proof}

\subsection{Blocks}

We next prove the preservation of $\ell$-blocks.

\begin{prop}   \label{prop:blocks}
 The bijection constructed in Theorem~\ref{thm:MR} respects $\ell$-blocks.
\end{prop}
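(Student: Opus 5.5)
We want to show that $\chi$ and $f(\chi)$ lie in corresponding $\ell$-blocks. Here $f$ is the bijection of Theorem~\ref{thm:MR}, and since $\Sub_G(x)\le G$ we make the correspondence precise via Brauer's third main theorem or block induction.

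We use the Brauer--Michel / Cabanes--Enguehard parametrisation of unipotent $\ell$-blocks. For $\ell$ a non-defining prime with $d=e_\ell(q)$ (apart from the small exceptions for bad $\ell$, which do not arise when Sylow $\ell$-subgroups are abelian, since then $\ell$ is large relative to $W$ or we are in explicitly known cases), two unipotent characters of $G$ lie in the same $\ell$-block if and only if they lie in the same $d$-Harish-Chandra series $\cE(G,(\bL,\la))$. The block is then $b_G(\bL,\la)$, with defect group containing $Z(\bL)^F_\ell$. Since $f$ maps $\cE(G,(\bL,\la))$ to $\cE(\hat S,(\bL,\la))$ series by series, it suffices to prove two things: all characters in $\cE(\hat S,(\bL,\la))$ lie in a single $\ell$-block $\hat b$ of $\hat S$, and $\hat b^G=b_G(\bL,\la)$ in the sense of Brauer induction.

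\textbf{Step 1.} Apply the same block theorem to the connected group $S=\bS^F$. This gives that $\cE(S,(\bL,\la))$ forms a single unipotent block $b_S(\bL,\la)$, with $\cE(S,(\bL,\la))$ the $S$-conjugates of series for $\bL$ up to $\norm_S$. Because $\hat S/S\cong W_G/W_S$ and the Sylow $\ell$-subgroup is abelian and contained in $\bT_d^F\le S$, the quotient $\hat S/S$ is an $\ell'$-group. By Fong--Reynolds / Clifford theory, the blocks of $\hat S$ covering $b_S$ are in bijection with $\hat S_{b}$-orbits of extensions. The characters of $\cE(\hat S,(\bL,\la))$ all cover the $\hat S$-orbit of $b_S(\bL,\la)$. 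However, over an $\ell'$-quotient they may split into several blocks, one for each character of the inertia quotient.

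\textbf{Step 2 (main obstacle).} This step addresses that splitting. I would instead compare blocks through central characters, using the formula from the proof of Theorem~\ref{thm:bij}. For $g=xy$ with $x\in Z(\bL)^F_\ell$ we have $\chi(g)=\pm f(\chi)(g)$. Combining this with the degree formula from the same proof, namely $\chi(1)_\ell=f(\chi)(1)_\ell$ with defect zero relative to $\la$, one checks $\omega_\chi(\hat K)\equiv\omega_{f(\chi)}(\hat K)$ modulo the maximal ideal on class sums of $\ell$-regular elements in $\cent_G(D)$, with $D$ a Sylow $\ell$-subgroup. This is precisely Brauer's criterion for $\mathrm{bl}(f(\chi))^G=\mathrm{bl}(\chi)$ when $D\le\hat S$ and $\cent_G(D)\le\hat S$, which holds as $\cent_G(D)\le\norm_G(\bT_d)\le\hat S$.

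Alternatively, use Brauer's third main theorem. The blocks of $\norm_G(D)\le\hat S$ inducing to $b_G(\bL,\la)$ are those covering $b_{\cent}(\bL,\la)$, by Cabanes--Enguehard. Transitivity of block induction then shows that the blocks of $\hat S$ containing $\cE(\hat S,(\bL,\la))$ all induce to $b_G(\bL,\la)$. This gives the required preservation: $\mathrm{bl}(f(\chi))^G=\mathrm{bl}(\chi)$ for all $\chi\in\Uch^x(G)$. The hardest point is verifying that every block of $\hat S$ above $b_S(\bL,\la)$ induces to the same block of $G$. I would settle this via the Brauer pair $(D,b_{\cent_G(D)})$ inclusion argument, checking the remaining exceptional case $G_2(4)$, $\ell=5$ from character tables.
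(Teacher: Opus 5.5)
Your proposal has a genuine gap, and it sits exactly where you locate the main obstacle. What is needed, and what the paper proves, is that each series $\cE(\hat S,(\bL,\la))$ lies in a \emph{single} $\ell$-block of $\hat S$, and that distinct series lie in distinct blocks. Together with \cite[Thm~5.24]{BMM93} for $G$, this shows that $f$ carries the block partition of $\Uch^x(G)$ onto that of $\Uch^x(\Sub_G(x))$, which is the sense of ``respects $\ell$-blocks'' here.

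Your Step~1 instead asserts that $\cE(\hat S,(\bL,\la))$ may split over several blocks because $\hat S/S$ is an $\ell'$-group. That does not happen. A defect group $D$ of $b_S(\bL,\la)$ can be chosen to contain $Z(\bL)^F_\ell$, whose centraliser is $L\le S$ under the present hypotheses. Hence $\cent_{\hat S}(D)\le S$, and $b_S(\bL,\la)$ is covered by a unique block of $\hat S$.

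The paper reaches the same conclusion via Theorem~\ref{thm:d-hc}. There, $R_\bM^{\hbS}:=\Ind_S^{\hat S}\circ R_\bM^\bS$ is a $d$-Harish-Chandra theory in the sense of \cite[Def.~2.9]{KM13} that commutes with generalised decomposition maps, so the argument of \cite[Prop.~2.17]{KM13} applies. Separation of series follows because a block of $\hat S$ covers a single $\hat S$-orbit of blocks of $S$, and \cite[Thm~5.24]{BMM93} applies to $S$. Having given up single-block-ness, even a proof of $\mathrm{bl}(f(\chi))^G=\mathrm{bl}(\chi)$ would not show that $f$ sends one block of $G$ into one block of $\hat S$.

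Step~2 does not establish even this weaker statement. Theorem~\ref{thm:bij} gives $\chi(xy)=\pm f(\chi)(xy)$ only for $\ell$-elements $x$ with $\cent^\circ_\bG(x)\le\bC$, not for arbitrary $x\in Z(\bL)^F_\ell$ (e.g.\ not for $x=1$). The sign is not known to be $+1$, and the only degree information is $\chi(1)_\ell=f(\chi)(1)_\ell$. Central characters $\omega_\chi(\hat K)=|K|\chi(g)/\chi(1)$, however, involve the full degrees and the class lengths in $G$ versus $\hat S$. The bijection says nothing about these, so the congruence that ``one checks'' does not follow. Moreover, the classes relevant to Brauer's criterion are governed by the defect group of the block concerned, which for non-principal series is smaller than a Sylow $\ell$-subgroup.

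The alternative via Brauer's third main theorem and transitivity presupposes that the blocks of $\hat S$ containing $\cE(\hat S,(\bL,\la))$ are induced from suitable blocks of $\norm_G(D)$. That is precisely the point you leave open.

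Your route can be completed as follows. By Cabanes--Enguehard, $(Z(\bL)^F_\ell,b_L(\la))$ is a Brauer pair for both $b_S(\bL,\la)$ and $b_G(\bL,\la)$. Since $\cent_G(Z(\bL)^F_\ell)=L\le S$, this gives $b_L(\la)^S=b_S(\bL,\la)$ and $b_L(\la)^G=b_G(\bL,\la)$. Transitivity then yields $(b_S(\bL,\la)^{\hat S})^G=b_G(\bL,\la)$ for the unique block $b_S(\bL,\la)^{\hat S}$ covering $b_S(\bL,\la)$.
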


\begin{proof}
By Remark~\ref{rem:defchar} we may assume $\ell$ is a non-defining prime.
First assume $F$ is a Frobenius map. For any $d$-split Levi subgroup $\bM\le\bC$
we write
$R_\bM^{\hbS}:=\Ind_S^{\hat S}\circ R_\bM^\bS$. Then Theorem~\ref{thm:d-hc}
says that $R_\bM^{\hbS}$ satisfies a $d$-Harish-Chandra theory in the sense of
\cite[Def.~2.9]{KM13} above any unipotent $d$-cuspidal pair $(\bL,\la)$
of~$\bC$. Now observe that $R_\bM^\hbS$ commutes with generalised decomposition
maps; this is clear for $\Ind_S^{\hat S}$ and holds for $R_\bM^\bS$ by
\cite[Prop.~3.3.17]{GM20}. Furthermore, by our assumption on~$\ell$ all
centralisers of $\ell$-elements in $\bS$ are $d$-split Levi subgroups, and thus
equal to the centraliser of the $\ell$-part of their centre. It then follows
that the arguments in the proof of \cite[Prop.~2.17]{KM13} apply to our present
situation. Thus, all constituents of $R_\bL^\hbS(\la)$ lie in the same
$\ell$-block of $\hat S$. That is, all elements of $\cE(\hat S,(\bL,\la))$ lie
in the same $\ell$-block of $\hat S$. Also, by \cite[Thm~5.24]{BMM93}, all
elements of $\cE(G,(\bL,\la))$ lie in the same $\ell$-block of $G$.

On the other hand, by the same reference, unipotent characters of $G$ above
non-conjugate $d$-cuspidal pairs of $G$ lie in different blocks. Assume that
$\ga_i\in\cE(\hat S,(\bL_i,\la_i))$ lie in the same $\ell$-block, for 
unipotent $d$-cuspidal pairs $(\bL_i,\la_i)$ of $\bC$, $i=1,2$.
Then there are some characters $\chi_i\in\Irr(S|\ga_i)$ lying in the same
$\ell$-block of $S$. But all characters of $S$ below $\ga_i$ lie in
$\cE(S,(\bL_i,\la_i))$ by definition. Then \cite[Thm~5.24]{BMM93} applied
to~$S$ shows that
$(\bL_1,\la_1)$ and $(\bL_2,\la_2)$ must be conjugate in $S$ and hence in
$\hat S$, so $\ga_1,\ga_2$ lie in the same $d$-Harish-Chandra series. The claim
is then clear from our construction of $f$ in Theorem~\ref{thm:MR} using the
maps between $d$-Harish-Chandra series in Theorem~\ref{thm:bij}.

If $F$ is not a Frobenius map, so $G$ is a Suzuki or Ree group we can argue in
precisely the same way, since again there exists a $d$-Harish-Chandra theory
for unipotent characters by \cite{BMM93}.
\end{proof}

\subsection{Rationality properties}   \label{subsec:rat}
We complete the proof of Theorem~\ref{thm:main} by discussing the Galois
equivariance over $\QQ_\ell$ of our bijections.

\begin{thm}   \label{thm:galois}
 In the situation of Theorem~\ref{thm:main}, the bijection
 $f:\Uch^x(G)\to\Uch^x(\Sub_G(x))$ can be chosen such that in addition
 $\QQ_\ell(\chi)=\QQ_\ell(f(\chi))$ for all $\chi\in\Uch^x(G)$.
\end{thm}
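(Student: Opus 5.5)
We will reduce the claim to one comparison: the action of $\mathcal G:=\mathrm{Gal}(\overline{\QQ}_\ell/\QQ_\ell)$ on $\Uch(G)$, and on $\Uch(\hat S)$ (respectively $\Uch(\Sub_G(x))$), should be compatible with the parametrisations through $d$-Harish-Chandra theory. By Remark~\ref{rem:defchar} we may take $\ell$ non-defining. The sporadic case $G=G_2(4)$, $\ell=5$, $\Sub_G(x)=J_2$ will be checked directly on the known character tables. So we may assume $\Sub_G(x)=\hat S$, and in the Suzuki and Ree cases we argue verbatim with $\Phi$-Harish-Chandra theory.

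\textbf{Step 1: Galois stability of the series and fields on the $G$ side.} First we check that $\mathcal G$ permutes the $d$-Harish-Chandra series of both groups. For $G$, Lusztig induction $R_\bL^\bG$ is defined over $\QQ$, so $\sigma\in\mathcal G$ sends $\cE(G,(\bL,\la))$ to $\cE(G,(\bL,\la^\sigma))$. The same holds for $\cE(\hat S,(\bL,\la))$, via $\Ind_S^{\hat S}\circ R_\bL^\bS$.

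Next we use known rationality facts for unipotent characters (Lusztig, Geck). A unipotent character $\chi$ has field $\QQ(\chi)$ generated by square roots of $\pm q$ or small roots of unity coming from Frobenius eigenvalues. Over $\QQ_\ell$ with $\ell\nmid q$, the only possible non-rational values come from cuspidal unipotent characters such as those of $E_7,E_8,{}^2B_2,{}^2G_2,{}^2F_4$ and $\sqrt{q}$-type values. We will record $\QQ_\ell(\chi)=\QQ_\ell(\la)\cdot K_\vhi$, where $\chi=\pm I^G(\vhi)$ and $K_\vhi$ is determined by $\vhi$ together with the cyclotomic Hecke algebra parameters. The cleanest route is the following. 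For $\ell$ with abelian Sylow subgroups and $\ell\ge5$ in most cases, the characters of the relative Weyl groups $W_G(\bL,\la)$ are rational over $\QQ_\ell$, because their values lie in $\QQ(\zeta_d)\subseteq\QQ_\ell$ since $d\mid\ell-1$. Then the $I^G$ are $\mathcal G$-equivariant up to signs, and hence $\QQ_\ell(\chi)=\QQ_\ell(\la)$.

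\textbf{Step 2: the same field formula for $\hat S$.} We prove $\QQ_\ell(\hat\chi)=\QQ_\ell(\la)$ for $\hat\chi\in\cE(\hat S,(\bL,\la))$. The characters of $S$ below $\hat\chi$ have field $\QQ_\ell(\la)$ by Step~1 applied to $S$ (componentwise over its simple factors). In the proof of Theorem~\ref{thm:d-hc} the extensions to inertia groups were constructed in two ways: via wreath-product extensions, which are rational, or via \cite[Thm~2.4]{Ma08} and \cite[Thm~4.6]{Ma07}, where extensions to graph-automorphism stabilisers can be chosen with values in the field of the original character. Clifford correspondence with rational characters of $W_G/W_S$ then gives $\QQ_\ell(\hat\chi)=\QQ_\ell(\la)$. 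Combining with Step~1 via $f=\pm I^{\hat S}\circ(I^G)^{-1}$ yields the claim.

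\textbf{Main obstacle.} The hardest step is showing that the extensions of unipotent characters of $S$ to their inertia groups in $\hat S$ can be chosen without enlarging the field over $\QQ_\ell$. The concern is the disconnected cases $\GL_n(q).2$, $\GU_n(q).2$, $\GO_n^+(q^2).2$ and $E_7$ with $d=4$, where the sign choices in $I^{\hat S}$ might introduce $\sqrt{\pm q}$ or $i$. The plan is to exploit that $\hat S/S$ is a $\{2,3\}$-group, so any extension has values in $\QQ_\ell(\la)(\zeta_{|\hat S/S|})$. Then we use $\ell\ge5$ and $d\mid\ell-1$ to show the relevant roots of unity already lie in $\QQ_\ell$ whenever they occur; for $\ell=3$ we check the finitely many small cases directly. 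If a bad case remains, the fallback is to choose $f$ within each $\mathcal G$-orbit, which is possible since both sides are permuted compatibly.
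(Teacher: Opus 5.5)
There is a genuine gap. The two equalities your strategy hinges on, $\QQ_\ell(\chi)=\QQ_\ell(\la)$ on the $G$ side and $\QQ_\ell(\hat\chi)=\QQ_\ell(\la)$ on the $\hat S$ side, are asserted rather than proved. The general principles you invoke for them are either non sequiturs or false.

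\textbf{The $G$ side.} Your Step~1 argues: the characters of $W_G(\bL,\la)$ are $\QQ_\ell$-rational, hence $I^G$ is $\mathcal G$-equivariant up to sign, hence $\QQ_\ell(\chi)=\QQ_\ell(\la)$. This does not follow. The isometry $I^G$ is only constrained by (\ref{eq:HC}), and that does not determine it. If $W_G(\bL,\la)$ is cyclic and no $d$-split Levi subgroup lies strictly between $\bL$ and $\bG$, any correctly signed bijection satisfies it. An example is the principal $3$-series of $G_2(q)$, with $W=C_6$, which contains $G_2[\theta^{\pm1}]$. So rationality of $\Irr(W_G(\bL,\la))$ says nothing about how $\sigma\in\mathcal G$ permutes $\cE(G,(\bL,\la))$. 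That it does so compatibly when $d=e_\ell(q)$ is a genuine arithmetic fact about Frobenius eigenvalues. It is the content of \cite[Satz~6.6, Folg.~6.7]{BM93} and \cite[Thm~8.8]{RSST}, or it can be read off from \cite[Prop.~4.5.5]{GM20}, as the paper does. For instance, two characters of the principal $8$-series of $E_8(q)$ have field $\QQ(\sqrt{-1})$, and this is harmless only because $\ell\mid q^4+1$. Your argument uses no such input.

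\textbf{The $\hat S$ side.} Here the gap sits exactly at your ``main obstacle'', and the proposed remedy is false. \cite[Thm~2.4]{Ma08} and \cite[Thm~4.6]{Ma07} give the existence of extensions, not extensions realised over the field of the character. Let $\psi\in\Uch(S)$ be $\hat S$-invariant with extension $\tilde\psi$, and let $\sigma$ fix $\QQ_\ell(\psi)(\zeta_e)$, where $e:=|\hat S/S|$. Gallagher only gives $\tilde\psi^\sigma=\tilde\psi\beta_\sigma$ with $\beta_\sigma$ a linear character of $\hat S/S$. Hence $\QQ_\ell(\tilde\psi)$ is an abelian extension of $\QQ_\ell(\psi)(\zeta_e)$ of exponent dividing $e$, such as $\QQ_\ell(\sqrt q)$ or $\QQ_\ell(\sqrt{-q})$; it need not be contained in it. Square roots of $\pm q$ are exactly the irrationalities the paper must control when it applies \cite[Thm~3]{DM24} to these disconnected groups.

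\textbf{What is missing.} The paper supplies an arithmetic link between $d$ and the shape of $\hat S$, checked case by case:
\begin{itemize}
\item $\GL_n(q).2\le\SO_{2n}^+(q)$ occurs only for $d$ odd, so $\sqrt q\in\QQ_\ell$.
\item $\GU_n(q).2$ occurs only for $d\equiv2\pmod 4$, so $\sqrt{-q}\in\QQ_\ell$.
\item In both cases \cite[Thm~3]{DM24} then gives $\QQ_\ell$-rationality.
\item The field automorphisms in $D_4(q^2).W(G_2)\le E_8(q)$ and in $\GO_n^+(q^2).2$ require \cite[Thm~7.6, Prop.~8.6]{RSST}; the former uses that all Frobenius eigenvalues equal~$1$.
\end{itemize}

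\textbf{The fallback.} Choosing $f$ orbit by orbit presupposes that the $\mathcal G$-orbit structures on the two sides already match, which is the statement to be proved. It would also in general destroy the property $f(\chi)(xy)=\pm\chi(xy)$, which the same $f$ must keep.
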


\begin{proof}
Let $\bG$ be simple with Steinberg map $F:\bG\to\bG$.  By
Remark~\ref{rem:defchar}, $\ell$ is not the underlying characteristic
of~$\bG$. If $F$ is not a Frobenius map, then as discussed in the proof of
Theorem~\ref{thm:MR}, the map $f$ just comes from ordinary $d$-Harish-Chandra
theory. Here the claim follows from \cite[Folg.~6.7]{BM93} (or can be checked
easily).

Thus, we may assume $F$ is a Frobenius map with respect to an $\FF_q$-structure.
Let $\bT_d\le\bG$ be a Sylow $d$-torus where $d=e_\ell(q)$ and let
$x\in T_d:=\bT_d^F$ be an $\ell$-element. Let $\bC:=\cent_\bG(x)$ and
$\hat S=\Sub_G(x)$. By the description of $\Uch^x(G)$ in the proof of
Theorem~\ref{thm:MR} the set $\Uch^x(G)$ consists only of characters lying in
$d$-Harish-Chandra series above unipotent $d$-cuspidal pairs $(\bL,\la)$
of~$\bC$. We discuss the occurring situations case by case.

First assume $G:=\bG^F$ is of exceptional type. In the non-generic case with
$\Sub_G(x)=J_2$ in $G_2(4)$ at $\ell=5$ it is easily verified from the known
character tables (see \cite{GAP}) that all unipotent characters involved are
rational. (In fact, in this case the whole bijection from
\cite[Prop.~3.3]{Ma26} can be chosen Galois equivariant over $\QQ$: on either
side, eight characters are rational, and the other ten take values in
$\QQ(\sqrt{5})$.)

In the generic case $\Sub_G(x)=\hat S$, the subnormalisers can be found in
\cite[Tab.~1 and~2]{Ma26}. If $\bC$ is a maximal torus and so
$\hat S=\norm_G(\bT_d)$ then $(\bL,\la)=(\bC,1)$, and thus $\Uch(\hat S)$ is
in bijection with
$\Irr(W_G(\bT_d))$ by deflation. Now the irrationalities of $\cE(G,(\bC,1))$ and
$\Irr(W_G(\bT_d)$ are well-known (see \cite[Prop.~4.5.5]{GM20}) and comparing
these the claim can be seen to hold (this was in fact already checked in
\cite[Satz~6.6, Folg.~6.7]{BM93}). Thus, it remains to discuss the cases in
\cite[Tab.~2]{Ma26}. Four essentially different situations occur here.
Either $\hat S$ is an extension of a group of type $A_2$ by a graph or
graph-field automorphism of order~2, or $\hat S$ is an extension of a group of
type~$D_4$ by a group of graph automorphisms of order~3 or 6, or $\hat S$ is an
extension of a group of type $E_6$ by a graph or graph-field automorphism of
order~2, or $\hat S$ is an extension of a group of type~$D_4$ by the Weyl group
of $G_2$. In all but the last case, the irrationalities of $\Uch(\hat S)$ are
described in \cite[Thm~3]{DM24}, while again the irrationalities of
$\Uch(G)$ are well-known \cite[Prop.~4.5.5]{GM20}. From this, the claim is
seen to hold.

In the fourth case, $G=E_8(q)$, $d=8$ and $\hat S=D_4(q^2).W(G_2)$. Here,
$\hat S/S\cong W(G_2)$ acts on $S=D_4(q^2)$ by a group of graph automorphisms
$\fS_3$ times
a group $C_2$ generated by a field automorphism $\ga$. All unipotent characters
of $S$ extend to $\QQ_\ell$-rational characters of $S\langle\ga\rangle$ by
\cite[Thm~7.6]{RSST}, as all Frobenius eigenvalues are equal to~1. There exist
8 unipotent characters $\rho$ of $S$ invariant under $\fS_3$; they have one
rational extension to $S.\fS_3$ by \cite[Thm~3]{DM24} and thus all three
characters above such $\rho$ are rational-valued. Above each of the two orbits
of unipotent characters of length~3, there is, of course, just one character
of~$S.\fS_3$, necessarily rational.
Thus, all unipotent characters of $\hat S$ are rational valued. By
\cite[Prop.~4.5.5]{GM20}, all but two of the unipotent characters of $E_8(q)$
in the principal 8-series are rational, while the other two have values in
$\QQ(\sqrt{-1})$. Since $\ell|(q^4+1)$ here, all characters are thus
$\QQ_\ell$-rational, as required.
\par
We now investigate the groups $G$ of classical type. Here, all unipotent
characters of $G$ are rational \cite[Prop.~4.5.5]{GM20}, so it suffices to
argue that all elements of $\cE(\hat S,(\bL,\la))$ are $\QQ_\ell$-rational.
It is shown in \cite[Thm~8.8]{RSST} that for any $d$-cuspidal pair $(\bL,\la)$
of $\bG$ there is a bijection
$\Irr(\norm_{\hat G}(\bL)|\la)\to\cE(\hat G,(\bL,\la))$, equivariant under the
absolute Galois group of $\QQ_\ell$. Thus our claim holds for unipotent
characters in $d$-Harish-Chandra series $(\bL,\la)$ with
$\hat S=\norm_{\hat G}(\bL)$. This covers, in particular, the case when
$\bS=\cent_\bG(\bT_d)$ is minimal $d$-split.

We consider the remaining configurations. First assume $G=\SL_n(q)$.
In the case when $\Sub_G(x)=(\GL_{ad}(q)\times\GL_r(q))\cap G$ with $n=ad+r$,
all unipotent characters of $\Sub_G(x)$ are rational, being restrictions of
unipotent characters of $\GL_{ad}(q)\times\GL_r(q)$. By \cite[Thm~4.6]{Ma26}
this leaves us with the case $\Sub_G(x)=(\GL_d(q)\wr\fS_a)\cap G$, $n=ad$.
Here, by the character theory of wreath products (see \cite[Thm~4.4.8]{JK}),
all characters of $\GL_d(q)\wr\fS_a$ above the (rational) unipotent
characters of $\GL_d(q)$ are rational, and thus so are their (irreducible)
restrictions to $\Sub_G(x)$. The same arguments apply to the subnormalisers
in $G=\SU_n(q)$ described in \cite[Thm~4.10]{Ma26}. For $G=\Sp_{2n}(q)$ when
$q$ is even, and for $\SO_{2n+1}(q)$, \cite[Thms~4.17 and~4.18]{Ma26} show
that there occur further subnormalisers of the form
$\GO_{ae}^\pm(q)\times\Sp_{2r}(q)$, or $\GO_{ae}^\pm(q)\times\GO_{2r+1}(q)$.
By \cite[Thm~3]{DM24} all invariant unipotent characters of $\SO_{ae}^\pm(q)$
have a rational extension to $\GO_{ae}^\pm(q)$, while the group $\GO_{2r+1}(q)$
is a direct product of $\SO_{2r+1}(q)$ with $C_2$, so all unipotent characters
of these subnormalisers are again rational. This also deals with the
subnormalisers in $G=\SO_{2n}^-(q)$ described in \cite[Thm~4.27]{Ma26}.

Finally, assume $G=\SO_{2n}^+(q)$. The above arguments allow us to handle
all types of subnormalisers listed in \cite[Thm~4.25]{Ma26}, except for
$\Sub_G(x)=\GL_n(q).2$, $\GU_n(q).2$ or $\GO_n^+(q^2).2$. More precisely,
the case
$\GL_n(q).2$ occurs only if the order $d$ of $q$ modulo~$\ell$ is odd. In this
case, $q$ has a square root modulo~$\ell$, and hence also in $\QQ_\ell$. The
case $\GU_n(q).2$ occurs only if $d\equiv2\pmod4$, so the order of $-q$
modulo~$\ell$ is odd, and thus $-q$ has a square root in $\QQ_\ell$. But under
these conditions all unipotent characters of $\Sub_G(x)$ are $\QQ_\ell$-rational
by \cite[Thm~3]{DM24}. 
The subnormaliser $\Sub_G(x)=\GO_n^+(q^2).2$ is an extension of $\SO_n^+(q^2)$
by the direct product of a group of graph automorphisms of order~2 with a group
of field automorphisms of order~2. Here, we may conclude by applying
\cite[Prop.~8.6]{RSST} twice, once for $G$ and once for $\Sub_G(x)$.
\end{proof}

This completes the proof of Theorem~\ref{thm:main}.

\begin{proof}[Proof of Theorem~\ref{thm:mainB}]
Assume the situation of Theorem~\ref{thm:mainB}, so $\ell\ge5$ is good for
$\bG$ and $\ell$ does not divide $q-\eps$ if $G=A_n(\eps q)$,
$\eps\in\{\pm1\}$. Then
$\ell\in\Gamma(\bG,F)$ in the notation of \cite{CE94}, and so by
\cite[Prop.~2.5]{CE94}, for any $\ell$-elements $1\ne x\in\bG^F$ there exists
a proper $d$-split Levi subgroup $\bC$ of $(\bG,F)$ such that
$\cent_\bG^\circ(x)\le \bC$.  The existence
of a bijection satisfying the preservation of character values up to sign now
follows by Theorem~\ref{thm:bij}, again using that $\chi\in\Uch^x(G)$ implies
that $\chi$ lies above a unipotent $d$-cuspidal pair of~$\bC$. For the
assertion about rationality, by Remark~\ref{rem:all C} any $d$-split Levi
subgroup $\bC$ is among those discussed in Theorem~\ref{thm:galois} and thus
the Galois equivariance follows from that result.
\end{proof}


\end{document}